\def\captionof#1#2{{\def\@captype{#1}#2}}
\newcounter{tablegroup}
\newcounter{subtable}[tablegroup]
\newtheorem{thm}{Theorem}[section]
\newtheorem{cor}[thm]{Corollary}
\newtheorem{lem}[thm]{Lemma}
\newtheorem{prop}[thm]{Proposition}
\newtheorem{defn}[thm]{Definition}
\newtheorem{rem}[thm]{\bf Remark}
\newtheorem{exe}[thm]{\bf Example}
\numberwithin{equation}{section}
\begin{document}
\title[On Limit Sets of Monotone Maps on Regular Curves]
{On Limit Sets of Monotone Maps on Regular Curves}

\author{Aymen Daghar and Habib Marzougui}

\address{Aymen Daghar, \ University of Carthage, Faculty
	of Science of Bizerte, (UR17ES21), ``Dynamical systems and their applications'', Jarzouna, 7021, Bizerte, Tunisia.}
\email{aymendaghar@gmail.com}
\address{ Habib Marzougui, University of Carthage, Faculty
	of Science of Bizerte, (UR17ES21), ``Dynamical systems and their applications'', 7021, Jarzouna, Bizerte, Tunisia}
\email{habib.marzougui@fsb.rnu.tn}

\subjclass[2000]{ 37B20, 37B45, 54H20}
\keywords{ Minimal set, regular curve, $\omega$-limit set, $\alpha$-limit set, monotone map, recurrent point.}
\begin{abstract}
We investigate the structure of $\omega$-limit (resp. $\alpha$-limit) sets for a monotone map $f$ on a regular curve $X$.
We show that for any $x\in X$ (resp. for any negative orbit $(x_{n})_{n\geq 0}$ of $x$), the $\omega$-limit set
 $\omega_{f}(x)$ (resp. $\alpha$-limit set $\alpha_{f}((x_{n})_{n\geq 0})$) is a minimal set. This also hold for $\alpha$-limit set $\alpha_{f}(x)$ whenever $x$ is not a periodic point.
  These results extend those of Naghmouchi \cite{n}
    established whenever $f$ is a homeomorphism on a regular curve and those of Abdelli \cite{a}
    , whenever $f$ is a monotone map on a local dendrite. Further results related to the basin of attraction of an infinite minimal set are also obtained.
\end{abstract}
\maketitle

\section{\bf Introduction}

In the last two decades, the study of dynamical systems in one dimensional continua has developed and become increasingly intensive due to the rich dynamical properties that one dimensional maps can exhibit.
Examples of continua studied include, graphs, dendrites and local dendrites (see for instance \cite{a}, \cite{HM}, \cite{aam}, \cite{Nag3}).
Recently regular curves has given a special attention (see for example, \cite{an}, \cite{ka}, \cite{ka2}, \cite{n}, \cite{n2}, \cite{Seidler}). These form a large class of continua which includes local dendrites. The Sierpi\'{n}ski triangle is a well known example of a regular curve which is not a local dendrite. Regular curves appear in continuum theory and also in other branches of Mathematics such as complex dynamics; for instance, the Sierpi\'{n}ski triangle can be realized as the Julia set of the complex polynomial $p(z)=z^2+2$ (see \cite{Bldevan}). Also dendrites appear naturally as the Julia set of a complex polynomial $p(z) = z^2 + c$, then for certain values of $c$ (e.g. $c=i$), the Julia set $J$ of $p$ is a dendrite (see \cite{dev}, p. 296). Recall that the Julia set of $p$ is $J(p) = \{z\in \mathbb{C}: (p^n(z))_{n\geq 1} \textrm{ is bounded} \}$.
In \cite{Seidler}, Seidler proved that every homeomorphism of a regular curve has zero topological entropy (later, this result was extended by Kato in \cite{ka} to monotone maps).\\

In the present paper, we deal with several questions/problems. \\

First, we are interested in whether every $\omega$-limit set is a minimal set. Recall that this was proved in two cases:

$-$ for homeomorphisms on regular curves (Naghmouchi \cite{n})

$-$ for monotone maps on local dendrites (Abdelli \cite{a}).

 In Theorem \ref{t21} we prove a more general result by showing that this is in fact true for monotone maps on regular curves. We do not know whether the above result can be extended to still a larger class of curves. However, let us mention that already for monotone maps (resp. homeomorphism) on 
 dendroids
, it can happen that an $\omega$-limit
 set is not a minimal set (Makhrova (\cite{mak}, Theorem 5), (resp. Naghmouchi \cite{Nag3}). The dendroids constructed therein are rational curves.
  Some consequences are derived from Theorem \ref{t21}, we prove e.g. in Section 4 that the basin of attraction of an infinite minimal set for monotone maps on regular curves is closed extending the one of \cite{n} for homeomorphisms on regular curves.\\

 Second, we are interested to the study of accumulated points of backward orbit, say $\alpha$-limit set. For the case of homeomorphisms on regular curves, any $\alpha$-limit set is a minimal set as a consequence from above. In Theorem \ref{t31}, we prove a more general result by showing that for monotone map $f$ on a regular curve, any $\alpha$-limit set $\alpha_{f}(x)$ of a non periodic point $x$ is a minimal set. However, we built an example of a monotone map $f$ on an infinite star for which $\alpha_{f}(x)$ is not a minimal set for some periodic point $x$.\\

 Third, we address the question of the equality between the set of recurrent points and the set of almost periodic points.
 This was proved in two cases:
 
 $-$ for homeomorphisms on regular curves (Naghmouchi \cite{n2})
 
 $-$ for monotone maps on local dendrites (Abdelli et al. \cite{aam})
\bigskip

\section{\bf Preliminaries}

Let $X$ be a compact metric space with metric $d$ and let $f: X\longrightarrow X$ be a continuous map. Let $\mathbb{Z},\ \mathbb{Z}_{+}$ and $\mathbb{N}$ be the sets of integers, non-negative integers and positive integers, respectively. For $n\in \mathbb{Z_{+}}$ denote by  $f^{n}$ the $n$-$\textrm{th}$ iterate of $f$; that is, $f^{0}=\textrm{identity}$ and $f^{n}=f\circ f^{n-1}$ if $n\in \mathbb{N}$. For any $x\in X$, the subset
$\textrm{Orb}_{f}(x) = \{f^{n}(x): n\in\mathbb{Z}_{+}\}$ is called the \textit{orbit} of $x$ (under $f$). A subset $A\subset X$ is called \textit{$f-$invariant} (resp. strongly $f-$invariant) if $f(A)\subset A$ (resp., $f(A)=A$); it is further called a \textit{minimal set} (under $f$) if it is closed, non-empty and does not contain any $f-$invariant, closed proper non-empty subset of $X$. We define the \textit{$\omega$-limit} set of a point $x$ to be the set:
\begin{align*}
\omega_{f}(x) & = \{y\in X: \liminf_{n\to +\infty} d(f^{n}(x), y) = 0\}\\
& = \underset{n\in \mathbb{N}}\cap\overline{\{f^{k}(x): k\geq n\}}.
\end{align*}

A point $x\in X$ is called:

$-$ \textit{Periodic} of period $n\in\mathbb{N}$ if
$f^{n}(x)=x$ and $f^{i}(x)\neq x$ for $1\leq i\leq n-1$; if $n = 1$, $x$ is called a \textit{fixed point} of $f$ i.e.
$f(x) = x$;


$-$ \textit{Almost periodic} if for any neighborhood $U$ of $x$ there is $N\in\mathbb{N}$ such that
$\{f^{i+k}(x): 0\leq i\leq N\}\cap U\neq \emptyset$, for all $k\in \mathbb{N}$.

$-$\textit{Recurrent} if $x\in \omega_{f}(x)$.


We denote by P$(f$), AP$(f$), R$(f$) and $\Lambda(f)$ the sets of periodic points, almost periodic points, recurrent points and the union of all $\omega$-limit sets of $f$, respectively. Define the space

$X_{\infty} = \displaystyle\bigcap_{n\in \mathbb{N}}f^{n}(X)$. From the definition, we have the following inclusions:
$$P(f)\subseteq \textrm{AP}(f)\subseteq R(f)\subseteq \Lambda(f)\subseteq X_{\infty}.$$

This was proved in two cases:

$-$ for homeomorphisms on regular curves (Naghmouchi \cite{n2})

$-$ for monotone maps on local dendrites (Abdelli et al. \cite{aam})

In the definitions below, we use the terminology from Nadler \cite{Nadler} and Kuratowski \cite{Kur}.

\begin{defn}[\cite{Kur}, p. 131]\label{def} \textit{Let $X, Y$ be two topological spaces.
		A continuous map $f: X\longrightarrow Y$ is said to be \textit{monotone} if for any connected subset $C$ of $Y$, $f^{-1}(C)$ is connected.}
\end{defn}

When $f$ is closed and onto, Definition \ref{def} is equivalent to that the preimage of any point by $f$ is connected (cf. \cite{Kur}, p. 131). Notice that $f^{n}$ is monotone for every $n\in \mathbb{N}$ when $f$ itself is monotone.

A \emph{continuum} is a compact connected metric space.
An \emph{arc} $I$ (resp. a \emph{circle}) is any space homeomorphic to the compact interval $[0, 1]$
(resp. to the unit circle $\mathbb{S}^{1} =\{z\in \mathbb{C}: \ \vert z\vert = 1\}$).
A space is called \textit{degenerate} if it is a single point, otherwise; it is \textit{non-degenerate}.
A \textit{dendrite} is a locally connected continuum which contains no circle.
A \textit{local dendrite} is a continuum every point of which has a dendrite neighborhood. By (\cite{Kur}, Theorem 4, p. 303), a local dendrite is a locally connected continuum containing only a finite number of
circles. As a consequence every sub-continuum of a local
dendrite is a local dendrite (\cite{Kur}, Theorems 1 and 4, p. 303). Every graph and every dendrite is a local
dendrite.

A \textit{regular curve} is a continuum $X$ with the property that for every point $x\in X$ and each open neighborhood $V$ of $x$ in $X$, there exists an open neighborhood $U$ of $x$ in $V$
such that the boundary set $\partial U$ of $U$ is finite.
Each regular curve is a $1$-dimensional locally connected continuum.
It follows that each regular curve is locally arcwise connected (for more details see \cite{Kur} and \cite{Nadler}). In particular every local dendrite is a regular curve (cf.
\cite{Kur}, p. 303).
A continuum $X$ is said to be \textit{rational} if any point in $X$ has a local base of open sets
with countable boundary. In particular every regular curve is a rational curve.\\

Let $X$ be a compact metric space. We denote by $2^X$ (resp. $C(X)$) the set of all non-empty compact subsets (resp. compact connected subsets) of $X$. The Hausdorff metric $d_H$ on $2^{X}$ (respectively $C(X)$) is defined as follows: $d_H (A,B) = \max \Big(\sup_{a\in A} d(a,B), \sup_{b\in B} d(b,A)\Big)$, where $A, B \in 2^X$ (resp. $C(X)$). For $x\in X$ and $M\in 2^X$, $d(x,M) = \inf_{y \in M} d(x,y)$. With this distance, $(C(X), d_H)$ and  $(2^X, d_H)$ are compact metric spaces. Moreover if $X$ is a continuum, then so are $2^{X}$ and $C(X)$ (for more details, see \cite{Nadler}). Let $f: X\longrightarrow X$ be a continuous map of $X$. We denote by $2^{f}:2^{X}\longrightarrow2^{X}, A\to f(A)$, called the induced map. Then $2^{f}$ is also a continuous self mapping of $(2^{X},d_{H})$ (cf. \cite{Nadler}). For a subset $A$ of $X$, we denote by diam$(A) = \sup_{x,y\in A} d(x,y)$.

A family $(A_{i})_{i\in I}$ of subsets of $X$ is called a \textit{null family} if for any infinite sequence $(i_{n})_{n\geq 0}$ of $I,\; \displaystyle\lim_{n\to +\infty}\textrm{diam}(A_{i_{n}})=0$.
\medskip
It is well known that 
each pairwise disjoint family of subcontinua of a regular curve is null (see \cite{LFS}). For the convenience of the reader, we give below a proof of it.

\begin{prop} \label{Fs}
Let $X$ be a regular curve. Then for any $\varepsilon>0$ and for any family of pairwise disjoint subcontinua $(A_{i})_{i\in I}$ of $X$, the set \\
$\{i\in I: \textrm{diam}(A_{i})\geq \varepsilon\}$ is finite. In particular if $(A_{n})_{n\geq 0}$ is a sequence of pairwise disjoint continua, then $(A_{n})_{n\geq 0}$ is a null family.
\end{prop}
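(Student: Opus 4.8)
The plan is to argue by contradiction, combining the compactness of the hyperspace $C(X)$ with the defining finite-boundary property of a regular curve. Suppose the set $\{i\in I:\operatorname{diam}(A_i)\geq\varepsilon\}$ were infinite. Then I would extract a sequence of distinct indices, giving pairwise disjoint subcontinua $(A_n)_{n\geq 0}$ with $\operatorname{diam}(A_n)\geq\varepsilon$ for every $n$. Since $(C(X),d_H)$ is a compact metric space, after passing to a subsequence I may assume $A_n\to A$ in the Hausdorff metric for some $A\in C(X)$. Because the diameter is continuous on $(C(X),d_H)$ (indeed $\lvert\operatorname{diam}(A)-\operatorname{diam}(B)\rvert\leq 2\,d_H(A,B)$), the limit satisfies $\operatorname{diam}(A)\geq\varepsilon>0$, so $A$ is a nondegenerate continuum and in particular nonempty.

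The next step is to localize at a point $x\in A$ and invoke regularity. I would choose an open neighborhood $V$ of $x$ with $\operatorname{diam}(\overline{V})<\varepsilon$, for instance a small metric ball, and then use the regular-curve hypothesis to obtain an open set $U$ with $x\in U\subseteq V$ whose boundary $\partial U=\{p_1,\dots,p_k\}$ is finite. The heart of the argument is that every $A_n$ sufficiently close to $A$ must cross $\partial U$. On one hand, since $x\in A$ and $A_n\to A$ in $d_H$, points of $A$ are limits of points of $A_n$, so for all large $n$ the continuum $A_n$ meets the open set $U$. On the other hand, since $\operatorname{diam}(A_n)\geq\varepsilon>\operatorname{diam}(\overline{U})$, the set $A_n$ cannot be contained in $\overline{U}$, hence it also meets the open set $X\setminus\overline{U}$. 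As $A_n$ is connected and meets the two disjoint open sets $U$ and $X\setminus\overline{U}$, it must contain a point of $\overline{U}\setminus U=\partial U$.

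The contradiction is then a pigeonhole count: for all large $n$ the continuum $A_n$ meets the finite set $\partial U$, yet the $A_n$ are pairwise disjoint, so no point of $\partial U$ can lie in two different $A_n$. This would produce an injection from an infinite set of indices into $\{p_1,\dots,p_k\}$, which is impossible. Hence $\{i\in I:\operatorname{diam}(A_i)\geq\varepsilon\}$ must be finite. The ``in particular'' clause follows at once: for a sequence $(A_n)_{n\geq 0}$ of pairwise disjoint continua and any $\varepsilon>0$, only finitely many indices satisfy $\operatorname{diam}(A_n)\geq\varepsilon$, so $\operatorname{diam}(A_n)\to 0$; the same then holds along every subsequence, which is exactly the null-family property.

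The step I expect to be the main obstacle is the crossing claim, namely that the limit continuum $A$ genuinely forces infinitely many $A_n$ to enter $U$. Everything hinges on having a \emph{common} cluster point $x$ shared by infinitely many of the $A_n$, and this is precisely what compactness of $C(X)$ supplies; without the passage to the Hausdorff limit one cannot guarantee that the large-diameter continua accumulate at a single point, and the finite boundary $\partial U$ could not be used. Once the common point $x$ is fixed, the connectedness argument and the pigeonhole conclusion are routine.
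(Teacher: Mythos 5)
Your proof is correct and follows essentially the same strategy as the paper's: force infinitely many of the pairwise disjoint large-diameter continua to cross the finite boundary of a suitable open set, then apply the pigeonhole principle. The only cosmetic difference is that the paper obtains the crossing by extracting convergent point sequences $x_n,y_n\in A_n$ with distinct limits $x\neq y$ and separating them, whereas you pass to a Hausdorff limit in $C(X)$ and use a neighborhood of diameter less than $\varepsilon$; both devices serve the same purpose and your version is equally valid.
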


\begin{proof}
Assume that for some $\varepsilon>0$, there exists an infinite family of pairwise disjoint continua $(A_{n})_{n\geq 0}$ such that for any $n\geq 0$, we have diam$(A_{n})>\varepsilon$. So for each $n\geq 0$, there exist $\{x_{n}, y_{n}\}\subset A_{n}$ such that $d(x_{n},y_{n})\geq \varepsilon$. As $X$ is a compact set, one can assume that the two sequences $(x_{n})_{n\geq 0}$ and $(y_{n})_{n\geq 0}$ converge to $x$ and $y$, respectively. Moreover $x\neq y$. So let $O_{x}$ be an open set with finite boundary such that $x\in O_{x}$ and $y\notin \overline{O_{x}}$. Then there exists some $N\geq 0$ such that for any $n\geq N$, we have $x_{n}\in O_{x}\cap A_{n}$. As $y\notin \overline{O_{x}}$, there exists $N_{1}>N$ such that for any $n\geq N_{1}$, we have $y_{n}\notin O_{x}$. Thus $A_{n}$ meets the finite boundary $\partial O_{x}$, this will lead to a contradiction since the $A_{n}, n\geq 0$ are disjoint. In particular, if $(A_{n})_{n\geq 0}$ is a sequence of pairwise disjoint continua, then
$\displaystyle\lim_{n\to +\infty}\textrm{diam}(A_{n})=0$.
\end{proof}

We will use the following lemma which is well known in the literature. 

\begin{lem}\label{lmcsf}$($\cite{Blokh}$)$
	Let $X$ be a compact metric space, $f: ~X \rightarrow X$ be a continuous map and let $\omega_f(x)$ be an $\omega$-limit
	set of a point $x\in X $. If $\omega_f(x)$ is the union of $N$ pairwise disjoint closed subsets
	$A_0, A_1, \dots, A_{N-1}$ which are cyclicly permuted by $f$ $($i.e. $f(A_i) = A_{i+1 (\textrm{mod}) N})$, then there exists $j\in \{0,1,\dots,N-1\}$ such that $\omega_{f^N}(x) = A_j.$
\end{lem}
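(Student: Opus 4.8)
The plan is to exploit that the finitely many pieces $A_0,\dots,A_{N-1}$ are pairwise disjoint compact sets, hence mutually separated by a positive distance, and to track how the forward orbit of $x$ threads through small neighborhoods of them. First I would fix pairwise disjoint open sets $V_0,\dots,V_{N-1}$ with $A_i\subseteq V_i$ and with pairwise disjoint closures; concretely, take $V_i$ to be the $\eps$-neighborhood of $A_i$ for $\eps<\frac13\min_{i\neq j}d(A_i,A_j)$, so that $\overline{V_i}\cap A_k=\emptyset$ for $k\neq i$. Since $f(A_i)=A_{i+1}\subseteq V_{i+1}$ and $f$ is continuous, the set $U_i:=V_i\cap f^{-1}(V_{i+1})$ is an open neighborhood of $A_i$ satisfying $f(U_i)\subseteq V_{i+1}$, and the $U_i$ remain pairwise disjoint because $U_i\subseteq V_i$.

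The key step, and the one I expect to be the main obstacle, is to show that the orbit eventually runs through the $U_i$ in strict cyclic order. Because $U:=\bigcup_i U_i$ is an open neighborhood of $\omega_f(x)=\bigcup_i A_i$, there is an $M$ with $f^n(x)\in U$ for all $n\geq M$; let $i(n)$ be the unique index with $f^n(x)\in U_{i(n)}$. Then $f^{n+1}(x)=f(f^n(x))\in f(U_{i(n)})\subseteq V_{i(n)+1}$, while also $f^{n+1}(x)\in V_{i(n+1)}$, and disjointness of the $V_k$ forces $i(n+1)\equiv i(n)+1\pmod N$. Hence $i(n)$ increments by one at each step, so $f^n(x)\in U_j$ holds on exactly one residue class modulo $N$; evaluating at $n=Nk$ shows that this class is $n\equiv 0\pmod N$. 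I write $j$ for the corresponding index, so that $f^{Nk}(x)\in U_j$ whenever $Nk\geq M$.

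It then remains to establish the two inclusions making up $\omega_{f^N}(x)=A_j$. For $\omega_{f^N}(x)\subseteq A_j$: the tail of the $f^N$-orbit lies in $U_j\subseteq\overline{V_j}$, so $\omega_{f^N}(x)\subseteq\overline{V_j}$; since the $f^N$-orbit is a subsequence of the $f$-orbit we also have $\omega_{f^N}(x)\subseteq\omega_f(x)=\bigcup_i A_i$, and as $\overline{V_j}\cap A_k=\emptyset$ for $k\neq j$ we conclude $\omega_{f^N}(x)\subseteq\overline{V_j}\cap\bigcup_i A_i=A_j$. For $A_j\subseteq\omega_{f^N}(x)$: given $a\in A_j\subseteq\omega_f(x)$, choose $f^{n_l}(x)\to a$; for large $l$ the point $f^{n_l}(x)$ lies in the neighborhood $U_j$ of $a$, so the cyclic-order step gives $n_l\equiv 0\pmod N$, i.e.\ the $f^{n_l}(x)$ form a subsequence of the $f^N$-orbit, whence $a\in\omega_{f^N}(x)$. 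Combining the two inclusions yields $\omega_{f^N}(x)=A_j$. The delicate points to get right are the construction of the nested neighborhoods $U_i$ (using only $f(U_i)\subseteq V_{i+1}$, which is all that continuity affords) and the verification that the single residue class on which $f^n(x)\in U_j$ is precisely $0\pmod N$, so that the same index $j$ serves both inclusions.
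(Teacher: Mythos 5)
Your argument is correct and complete: the orbit eventually stays in the disjoint union $U=\bigcup_i U_i$ because $d(f^n(x),\omega_f(x))\to 0$ for compact $X$, the step $f(U_{i(n)})\subseteq V_{i(n)+1}$ together with disjointness of the $\overline{V_k}$ forces the index to increment cyclically, and both inclusions $\omega_{f^N}(x)\subseteq A_j$ and $A_j\subseteq\omega_{f^N}(x)$ then follow as you describe. The paper does not prove this lemma at all --- it is quoted from Blokh--Bruckner--Humke--Sm\'{\i}tal \cite{Blokh} without proof --- so there is nothing internal to compare against; your proof is the standard neighborhood-tracking argument for such cyclic decompositions and can stand as a self-contained justification of the cited result.
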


In \cite{MI}, it was introduced the notion of totally periodic $\omega$-limit set. Recall that an $\omega$-limit set is said to be \textit{totally periodic } if it is composed of periodic points. A compact metric space $X$ has the \textit{$\omega$-FTP property} provided that for each self-continuous map $f$ of $X$, every totally periodic $\omega$-limit
set of $f$ is finite. In \cite{an}, it was proved that regular continua have the $\omega$-FTP property for homeomorphisms. Recently, the above result was relaxed to monotone maps in \cite{am}.

\begin{thm}$($\cite{am}$)$\label{c25}
	Let $X$ be a regular curve and $f: X\longrightarrow X$ be a monotone map.
	Then any totally periodic $\omega$-limit set of $f$ is finite.
\end{thm}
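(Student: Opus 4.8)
The plan is to argue by contradiction: assume $\omega:=\omega_f(x)$ is totally periodic but infinite, and extract a contradiction from the regular-curve structure through the null family property of Proposition \ref{Fs}. The strategy is to reduce, in two stages, to the case of an $\omega$-limit set consisting entirely of \emph{fixed} points of a suitable iterate, and then to exclude that case.

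First I would bound the periods occurring in $\omega$. Writing $\omega=\bigcup_{k\ge 1}\big(\textrm{Fix}(f^{k!})\cap\omega\big)$ as a countable increasing union of sets closed in the compact (hence Baire) space $\omega$, Baire's theorem furnishes an $N$ for which $\textrm{Fix}(f^{N})\cap\omega$ has nonempty interior in $\omega$. To promote this to a uniform bound, I would rule out arbitrarily large periods: if $p_k\in\omega$ were periodic of strictly increasing period, one could assume $p_k\to p\in\omega$ and use that a small neighborhood of $p$ has \emph{finite} boundary; the pairwise disjoint subcontinua carrying the distinct (and hence disjoint) orbits would then be forced to meet this finite boundary infinitely often, contradicting disjointness in the spirit of Proposition \ref{Fs}. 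Thus only finitely many periods occur; taking $N$ to be their least common multiple gives $\omega\subseteq\textrm{Fix}(f^{N})$.

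Next I would pass to $g:=f^{N}$, which is again monotone. Since every point of $\omega$ is now $g$-fixed, one has $\omega_g(x)\subseteq\textrm{Fix}(g)$ together with the cyclic decomposition $\omega_f(x)=\bigcup_{i=0}^{N-1}f^{i}(\omega_g(x))$ supplied by Lemma \ref{lmcsf}; hence it suffices to show that $K:=\omega_g(x)$ is a single point, for then $\omega_f(x)$ has at most $N$ points. Because $\omega$-limit sets are internally chain transitive and $g$ restricts to the identity on $K$, every $\varepsilon$-chain in $K$ has consecutive points within $\varepsilon$ of one another; thus $K$ is $\varepsilon$-chainable for every $\varepsilon>0$, and therefore $K$ is a subcontinuum of $X$.

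It remains to show that this pointwise-fixed subcontinuum $K$ is degenerate, and this is the step I expect to be the main obstacle. The orbit $\big(g^{k}(x)\big)_k$ converges to $K$ without ever entering it (an entry would make the orbit eventually constant, whence $K$ a singleton), yet $K$ is fixed pointwise. I would exploit monotonicity through the fibers $g^{-1}(y)$, $y\in K$: being pairwise disjoint subcontinua they form a null family by Proposition \ref{Fs}, which forces $d\big(g^{k}(x),g^{k+1}(x)\big)\to 0$. Coupling this with a separation argument — surrounding an interior point of $K$ by a neighborhood with finite boundary, which the slowly moving orbit would have to cross infinitely often — should trap the orbit in an arbitrarily small portion of $K$, contradicting $K=\omega_g(x)$ being nondegenerate. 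Turning ``the orbit converges to a nondegenerate fixed continuum'' into an outright contradiction using only monotonicity and finiteness of boundaries is the crux, and it is exactly here that the regular-curve hypothesis is indispensable.
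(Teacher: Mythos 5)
The paper offers no proof of Theorem \ref{c25}: it is quoted verbatim from \cite{am}, so your attempt can only be measured against the toolkit this paper deploys elsewhere, namely disjoint arcs pushed backward through monotone preimages so as to overload a finite boundary (see the proof of Theorem \ref{t21}). Judged that way, your outline has the right flavor but two of its steps are not yet arguments. In the period-bounding stage, the ``pairwise disjoint subcontinua carrying the distinct orbits'' do not exist until you construct them: periodic orbits are finite point sets, and to obtain disjoint \emph{continua} forced to meet a finite boundary you must join orbit points to points of $\textrm{Orb}_f(x)$ by arcs inside pairwise disjoint arcwise connected neighborhoods and then apply $f^{-m}$, using monotonicity to keep the preimages connected --- exactly the device of Claim 2 in the proof of Theorem \ref{t21}. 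Your sketch omits this construction and also never arranges the other ingredient it needs, namely $k+1$ distinct times at which the orbit of $x$ sits outside $\overline{U}$. The Baire step is harmless but buys nothing, since nonempty interior of $\textrm{Fix}(f^{N})\cap\omega$ in $\omega$ gives no uniform bound on the periods.

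The decisive gap is the one you flag yourself. After the (correct) reduction to $K=\omega_g(x)\subseteq\textrm{Fix}(g)$ with $K$ a subcontinuum, the entire content of the theorem is that a nondegenerate continuum of fixed points cannot be an $\omega$-limit set on a regular curve, and this is precisely what you do not prove. Your supporting claim is also misjustified: the fibers $g^{-1}(y)$, $y\in K$, forming a null family says nothing about the orbit of $x$, because $g^{k}(x)$ need not lie in any fiber over a point of $K$; the true statement $d\big(g^{k}(x),g^{k+1}(x)\big)\to 0$ follows instead from uniform continuity of $g$ together with $d(g^{k}(x),K)\to 0$ and $g|_{K}=\mathrm{id}$, with no monotonicity involved. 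The concluding ``separation argument'' then does not close: if $g^{k}(x)\in U$ and $g^{k+1}(x)\notin\overline{U}$ with $d(g^{k}(x),g^{k+1}(x))$ small, neither point need be near the finite set $\partial U$ (two metrically close points of a curve can lie on opposite sides of $U$ yet far from its boundary), so the orbit is not forced to accumulate on $\partial U$. To make anything cross the boundary you again need connected sets, i.e., arcs joining $f^{m+j}(x)$ to distinct fixed points of $K$ inside disjoint neighborhoods, pulled back by a monotone iterate to times when the orbit is outside $\overline{U}$. As written, the proposal establishes the reductions but not the theorem.
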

\bigskip

\section{\bf  The $\omega$-limit sets for monotone maps on regular curves}
\bigskip
The aim of this section is to prove the following theorem.

\begin{thm}\label{t21}
Let $X$ be a regular curve and $f$ a monotone self mapping of $X$. Then each $\omega$-limit set of $f$ is a minimal set.
\end{thm}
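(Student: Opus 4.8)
The plan is to prove minimality by analysing the connected components of $\omega:=\omega_f(x)$ and reducing to a situation that can be controlled by the null family property (Proposition \ref{Fs}) together with Theorem \ref{c25}. First I would dispose of the trivial case: if $\omega$ is finite, then it is a single periodic orbit and hence minimal. So assume $\omega$ is infinite; by Theorem \ref{c25} it cannot be totally periodic, so it carries a non-periodic point, and (containing a minimal subset) it also carries recurrent points.

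Next I would exploit monotonicity at the level of components. Since $f$ is monotone, the image of a subcontinuum is a subcontinuum, so $f$ sends each connected component of $\omega$ into a single component, inducing a map $\sigma$ on the set $\mathcal{C}$ of components; as $f(\omega)=\omega$, the map $\sigma$ is onto. If $\mathcal{C}$ is finite, then $\sigma$ is a permutation and, because $\omega$ is the limit set of the single orbit of $x$, the components must form one $\sigma$-cycle of some length $N$: two distinct cycles would be separated by a positive distance, and once the orbit of $x$ enters a small neighbourhood of one cycle, continuity keeps it there, contradicting accumulation on the other. Lemma \ref{lmcsf} then yields $\omega_{f^N}(x)=A_j$ for a single component $A_j$, which is a subcontinuum; since $f$ cyclically permutes the components, minimality of $A_j$ under the monotone map $f^N$ is equivalent to minimality of $\omega$ under $f$, so the finite-component case reduces to a connected limit set. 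If $\mathcal{C}$ is infinite, then its members are pairwise disjoint subcontinua and hence form a null family by Proposition \ref{Fs}, so their diameters tend to $0$.

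The heart of the proof is to show $\omega\subseteq\omega_f(y)$ for every $y\in\omega$, which together with $\omega_f(y)\subseteq\omega$ gives minimality. Fixing $y\in\omega$ and a target $z\in\omega$, I would choose $n_k$ with $f^{n_k}(x)\to y$ and $m_j$ with $f^{m_j}(x)\to z$, the issue being to convert closeness of the orbit of $x$ to $y$ at time $n_k$ into closeness of the orbit of $y$ to $z$ at a later time, uniformly in the number of iterates. Here I would pass to the induced map $2^{f}$ on $(C(X),d_H)$ and track the subcontinua carrying the relevant pieces of the orbit (the components of $\omega$, or small subcontinua with finite boundary furnished by regularity) together with their Hausdorff limits. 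The null family property is what prevents these subcontinua from spreading under iteration, giving the equicontinuity-type control needed to transport accumulation from the orbit of $x$ to the orbit of $y$; Theorem \ref{c25} is invoked to exclude the degenerate possibility that the recurrence so produced is purely periodic on an infinite set.

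The main obstacle is the non-injectivity of a monotone map: unlike in the homeomorphism case of \cite{n}, one cannot invert $f$ to follow components backwards, since the fibres $f^{-1}(\{z\})$ are in general non-degenerate subcontinua. Consequently the delicate point is to show that the orbits of the components neither asymptotically merge nor expand in an uncontrolled way, and it is precisely here that the combination of monotonicity (images and preimages of subcontinua remain connected), the finite-boundary property of the regular curve, and the null family of Proposition \ref{Fs} must be used in tandem. I expect this uniform control of the behaviour of subcontinua under iteration to be the crux of the argument.
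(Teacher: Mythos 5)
Your outline correctly identifies the two relevant tools (monotonicity preserving connectedness of preimages, and the finite-boundary/null-family structure of regular curves), but it stops exactly where the proof has to begin. The step you yourself call the heart of the argument --- transporting accumulation from the orbit of $x$ to the orbit of $y\in\omega_f(x)$ --- is not carried out: you appeal to ``equicontinuity-type control'' coming from the null family property, but Proposition \ref{Fs} only bounds diameters of \emph{pairwise disjoint} subcontinua; it gives no control whatsoever on how forward images of small continua spread under iteration, and indeed no such equicontinuity holds in general (on dendroids, which are rational curves, monotone maps do have non-minimal $\omega$-limit sets, so any argument that does not use the \emph{finite} boundaries of a regular curve in an essential, quantitative way cannot work). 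The actual mechanism, both here and in Naghmouchi's homeomorphism case, is a counting argument run \emph{backwards}: assuming $\omega_f(x)$ is not minimal, one fixes a proper minimal subset $M$, a neighbourhood $U$ of $M$ with $\mathrm{card}(\partial U)=k$ and $k+1$ times $n_0<\dots<n_k$ at which the orbit of $x$ lies outside $\overline U$, and then builds $k+1$ pairwise disjoint arcs joining late orbit points of $x$ to points tied to $M$; pulling these back by a single $f^{-(m-n_0)}$ produces, by monotonicity, $k+1$ pairwise disjoint \emph{connected} sets each containing some $f^{n_i}(x)\notin\overline U$ and each meeting $U$, hence each crossing the $k$-point set $\partial U$ --- a contradiction. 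Nothing in your proposal performs this construction or anything equivalent to it.

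You also miss the case that consumes most of the paper's proof: when the proper minimal subset $M$ is a periodic orbit (reduced to a fixed point $a$), the fibres $f^{-j}(a)$ are \emph{not} pairwise disjoint, so the disjoint-preimage trick cannot be applied to $a$ directly. One must first manufacture suitable points $y_V\in\omega_f(x)\setminus\{a\}$ in every small neighbourhood $V$ of $a$ that remain in $f^j(\overline V\cap\omega_f(x))$ for all $j$, split according to whether some such $y_V$ is non-periodic (so that its fibres $f^{-i}(y_V)$ \emph{are} pairwise disjoint and the counting argument applies), and otherwise use Theorem \ref{c25} to extract a non-periodic point of $\omega_f(x)$ with a genuine backward orbit to run the argument on. Your component decomposition and the reduction via Lemma \ref{lmcsf} are harmless (though your justification that finitely many components form a single cycle --- ``continuity keeps the orbit there'' --- is not a proof), but they do not engage with either of these two difficulties, which are where the theorem actually lives.
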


\begin{proof}
	Let $x\in X$. If $\omega_f(x)$ is finite, then it is a periodic orbit.
	Now assume that $\omega_f(x)$ is infinite and $\omega_f(x)$ is not minimal.
	Then there is a minimal set $M\subsetneq\omega_f(x)$. We distinguish two cases:
	\medskip
	
	\textbf{\textit{Case 1.}} \textit{$M$ is infinite}.
	Let $U$ be an open neighborhood of $M$ in $X$ with finite boundary of cardinality $k\in\mathbb{N}$
	and such that $\omega_f(x)\setminus \overline{U}\neq \emptyset$. Then there exist $k+1$ positive
	integers $n_0<\dots<n_{k}$ such that $f^{n_i}(x)\in X\setminus \overline{U}$, set $r=n_{k}-n_0$.
	Let $z\in M$, hence its orbit is infinite. So $f^{i}(z)\neq f^{j}(z)$, for $i\neq j$ and then as $X$ is locally arcwise connected, there exists a pairwise disjoint family $(W_{i})_{0\leq i\leq r}$ of arcwise connected open neighborhoods of $f^i(z)$: $0\leq i\leq r$. As $M\subset \omega_f(x)$, there exist some $m>n_0$ and a family of arcs $(J_{i})_{0\leq i\leq r}$ such that for any $0\leq i\leq r,\;J_{i}\subset W_i$ and $J_{i}$ join $f^{i}(z)$ and $f^{m+i}(x)$. As the arcs $J_{0},J_{1},\dots,J_{r}$ are pairwise disjoint, then so are the
	$f^{-(m-n_0)}(J), \dots, f^{-(m-n_0)}(J_{r})$. Set $C_i: = f^{-(m-n_0)}(J_{n_i-n_0})$, $0\leq i\leq k$. Then the $C_i$ are pairwise disjoint, $0\leq i\leq k$.
	Since $f$ is monotone, each  $C_i$ is connected. Moreover $C_i$ contains $f^{n_i}(x)$ and $f^{-(m-n_0)}(f^{n_i-n_0}(z))$.
	As $M = f^{p}(M)$, for any $p\geq 0$,
	$u_{i}:= f^{n_{i}-n_{0}}(z)\in M$ and there is $v_{i}\in M$ such that $f^{m-n_{0}}(v_{i})=u_{i}$. Thus $f^{-(m-n_0)}(f^{n_i-n_0}(z))$ contains $v_i$ and hence  $f^{-(m-n_0)}(f^{n_i-n_0}(z))\cap M \neq \emptyset$.
	This implies that $C_i$ intersects $U$ and
	$X\backslash U$, so it intersects the boundary $\partial U$ of $U$ for each $0\leq i\leq k$.
	Therefore $\partial U$ contains at least $k+1$ points, a contradiction.
	\medskip
	
	\textbf{\textit{Case 2.}} \textit{$M$ is finite}. In this case, $M = \textrm{Orb}_f(a)$ is a periodic orbit.
	
	\medskip
	
	Let $N$ be the period of $a$. By Lemma \ref{lmcsf}, $\omega_f(x) = \underset{0\leq i\leq N}\cup \omega_{f^{N}}(f^{i}(x))$, then
	$a\in \omega_{f^{N}}(f^{i}(x))$ for some $0\leq i\leq N$. Set $g= f^{N}$ and $y= f^{i}(x)$. Then $g$ is monotone, $g(a)=a$ and
	$a\in \omega_g(y)$. Moreover $\{a\}\subsetneq \omega_g(y)$; indeed, otherwise, $\{a\}=\omega_g(y)$ and so $\omega_f(y)=\omega_f(x)$
	is finite, a contradiction. Therefore without loss of generality, we may assume that
	$M=\{a\}$ is a single fixed point of $f$ with $\{a\}\subsetneq \omega_f(x)$.
	\medskip
	
	\textbf{Claim 1. } For each open neighborhood $V$ of $a$ in $X$ such that $\omega_f(x)\setminus \overline{V}\neq \emptyset$,
	there is a point $y_V\in \omega_f(x)\cap \overline{V}$ such that $y_{V}\neq \{a\}$ and $y_V\in f^{j}(\overline{V}\cap \omega_f(x))$, for all $j\in\mathbb{N}$.
	
	\begin{proof} Since $a\in \omega_f(x)$ and $\omega_f(x)\cap (X\setminus \overline{V}) \neq \emptyset$, there are an increasing sequence of positive integers $(n_i)_{i\geq 0}$
		and a sequence of integers $(t_i)_{i\geq 0}$ with the following properties:\\
		(i) for each $i\in \mathbb{Z}_+$, $t_i\geq i$ and $f^{n_i+j}(x)\in V$ for $0\leq j\leq t_i$\\
		(ii) $f^{n_i+t_i+1}(x)\notin V$.\\
		Since $X$ is a compact space, we may assume that the sequence $(f^{n_i+t_i}(x))_{i\geq 0}$ converges to a point $y_V$.
		Clearly, $y_V\in \omega_f(x)\cap \overline{V}$. Now let $j\in\mathbb{N}$.
		For every $i\geq j$, we have $0\leq n_i+t_i-j\leq n_i+t_i$ and so
		$f^{n_i+t_i}(x) = f^{j}(f^{n_i+t_i-j}(x))$. Let $y_{V,j}$ be some limit point of $(f^{n_i+t_i-j}(x))_{i\geq 0}$. Clearly $y_{V,j}\in \overline{V}\cap \omega_f(x)$ and $f^{j}(y_{V,j})=y_{V}$. Therefore for all  $j\in\mathbb{N}$, we have $y_V\in f^{j}(\overline{V}\cap \omega_f(x))$.\\
		On the other hand, if $y_{V}=a$, then $f(y_{V})=a$. As $(f^{n_i+t_i}(x))_{i\geq 0}$ converges to $y_{V}$, then $(f^{n_i+t_i+1}(x))_{i\geq 0}$ converges to $f(y_{V})=a$. Thus for $i$ large enough, we have $f^{n_i+t_i+1}(x)\in V$, which is a contradiction. This ends the proof of the claim.
	\end{proof}
	
	Since $X$ is a regular curve, there is an open neighborhood $U$ of $a$ in $X$ with finite boundary of cardinality $k\in\mathbb{N}$ and such that $\omega_f(x)\setminus \overline{U}\neq \emptyset$. Then there exist $k+1$ positive
	integers $n_0<\dots<n_{k}$ such that $f^{n_i}(x)\in X\setminus \overline{U}$. Set $r=n_{k}-n_0$. We distinguish two sub-cases.
	\medskip
	
	\textbf{Case 2.1.} For some open neighborhood $V$ of $a$ in $X$ such that
	$\overline{V}\subset U$, there exists a point $y:=y_V$ defined as in Claim 1 which is not periodic i.e. $y\notin P(f)$.
	Then the $f^{-i}(\{y\})$ ($0\leq i\leq r$) are pairwise disjoint non-empty connected compact subsets. Let $(W_{i})_{0\leq i\leq r}$ be a disjoint family of arcwise connected open neighborhoods of $f^{-i}(\{y\})$, $0\leq i\leq r$. Since $y\in \omega_f(x)$, there exists $b_{r}\in \omega_{f}(x)\cap f^{-r}(y)$. Then for any $0\leq i\leq r,\; f^{i}(b_{r})\in f^{-(r-i)}(\{y\})\cap\omega_f(x)\subset W_{r-i}\cap\omega_f(x)$. Hence there exists an integer $m>n_k$ such that $f^{m+i}(x)\in W_{r-i}$. Indeed, there exists $m_j\to +\infty$ so that $b_r = \underset{j\to +\infty}\lim f^{m_j}(x)$. Then  $f^{i}(b_r) = \underset{j\to +\infty}\lim f^{i+m_j}(x)$, for any $0\leq i\leq r$. It follows that there exists $j_i> n_k$ such that for all $j\geq j_i$, $f^{i+ m_j}(x)\in W_{r-i}$. It suffices to take $m= m_N$, where $N= \underset{0\leq i\leq r}\max j_i$, to get that $f^{i+ m}(x)\in W_{r-i}$. For any $0\leq i\leq r$, denote by $C_{i} = [f^{i}(b_{r}), f^{m+i}(x)]\cup f^{-(r-i)}(\{y\})$, where $[f^{i}(b_{r}), f^{m+i}(x)]$ is an arc in
	$W_{r-i}$ joining $f^{i}(b_{r})$ and $f^{m+i}(x)$. One has $C_i\subset W_{r-i}$. Since $f$ is monotone, $C_{i}$ is connected, for any $0\leq i\leq r$. Moreover
	the $C_{i}$ ($0\leq i\leq r$) are pairwise disjoint connected subsets, and among them $C_{n_{i}-n_{0}}$ ($ 0\leq i\leq k$).
	Set $K_i = f^{-(m-n_0)}(C_{n_{i}-n_{0}})$. Again by monotonicity of $f$, the $K_i$ ($0\leq i\leq k$) are pairwise disjoint
	connected sets. Observe that $K_i$ contains $f^{n_i}(x)$. On the other hand, $K_i$ contains $f^{-(m-n_0)}(f^{-(r-(n_{i}-n_{0}))}(y))=f^{-(m-n_{i}+r)}(y)$.
	As $y\in f^j(\overline{V})$, for all $j\in\mathbb{N}$, so there is $z_{i}\in \overline{V}$ such that
	$y= f^{m-n_{i}+r}(z_{i})$, since $m> n_k$. Thus $f^{-(m-n_0)}(f^{-(r-(n_{i}-n_{0}))})(y)$ contains $z_{i}$. Therefore $K_i\cap \overline{V}\neq \emptyset$.
	In result, $K_i$ intersects $X\setminus \overline{U}$
	and $U$, so it intersects the boundary $\partial U$, for $0\leq i\leq k$. This implies that $\partial U$ contains at least $k+1$ points. A contradiction.
	\medskip
	
	\textbf{Case 2.2.} For each open neighborhood $V$ of $a$ in $X$ such that
	$\overline{V}\subset U$, each point $y_V$ defined as in Claim 1 is periodic.
	\medskip
	
	For each $n\in\mathbb{N}$, let $V_{n}=B(a,\frac{1}{n})$ be the open ball centered at $a$ of radius $\dfrac{1}{n}$. One can assume that $\overline{V_{n}}\subset U$ and $\omega_f(x)\setminus \overline{V_n}\neq \emptyset$. By Claim 1, there exists an infinite sequence
	$(y_n)_{n\in \mathbb{N}}$ such that for any $n\in \mathbb{N}$, $y_{n} \in \omega_f(x) \cap \overline{V_n}$. Moreover, $y_n\neq a$ and $y_n\in f^{j}(\overline{V_n}\cap \omega_f(x))$, for all $n,j\in\mathbb{N}$. Clearly the sequence $(y_{n})_{n\in \mathbb{N}}$ converges to $a$ and moreover for any $n\geq 0$, $y_{n}\in P(f)$.
	\medskip
	
	For each $n\in \mathbb{N}$, denote by $p_{n}$ the period of $y_n$.
	\medskip
	
	\textbf{Claim 2}: The sequence $(p_{n})_{n\geq 0}$ is bounded.
	
	\begin{proof} On the contrary, there exists an increasing subsequence $(p_{n_{i}})_{i\geq 0}$ of $(p_{n})_{n\in \mathbb{N}}$. Since $f^{j}(y_n)$ converges to $a$, so for every $0\leq j\leq r$, there is $N>0$ such that $p_{N}>r$ and $\{y_{N},f(y_{N}),\dots, f^{r}(y_{N})\}\subset U$. Moreover $(f^{i}(y_N))_{0\leq i\leq r} \subset \omega_{f}(x)$ are pairwise distinct. Choose disjoint open arcwise connected neighborhoods $(W_i)_{0\leq i\leq r}$ of
		$f^{i}(y_N)$, $0\leq i\leq r$. Since $y_N\in \omega_f(x)$, there exist an integer $m>n_k$ such that $f^{m+i}(x)\in W_{i}$.
		Denote by $I_{i}=[f^{i}(y_N), f^{m+i}(x)]\subset W_{i}$ an arc
		 joining $f^{i}(y_N)$ and $f^{m+i}(x)$, $0\leq i\leq r$. We see that
		the $I_{i}$ ($0\leq i\leq r$) are pairwise disjoint connected subsets, and among them $I_{n_{i}-n_{0}}: 0\leq i\leq k$.
		Set $K_i = f^{-(m-n_0)}(I_{n_{i}-n_{0}})$. By monotonicity of $f$, the $K_i$ ($0\leq i\leq r$) are pairwise disjoint
		connected sets. Observe that $K_i$ contains $f^{n_i}(x)$. On the other hand, $K_i$ contains $f^{-(m-n_0)}(f^{n_i-n_0}(y_N))$. As $m-n_{0}>n_{i}-n_{0}$, we have
		\begin{align*}
		f^{-(m-n_0)}(f^{n_i-n_0}(y_N)) & = f^{-((m-n_{0})-(n_{i}-n_{0}))}(f^{-(n_{i}-n_{0})}(f^{n_i-n_0}(y_{N}))) \\
		& \supseteq  f^{-((m-n_{0})-(n_{i}-n_{0}))}(y_{N}) = f^{-(m-n_i)}(y_N).
		\end{align*}
		Recall that $f^{-j}(y_N)\cap \overline{V_N}\neq \emptyset$, for all $j\in\mathbb{N}$, hence $f^{-(m-n_i)}(y_N))\cap\overline{V_{N}} \neq \emptyset$, for all $0\leq i\leq k$. Therefore $K_i\cap \overline{V_N}\neq \emptyset$.
		In result, for $0\leq i\leq k$, $I_i$ intersects $X\setminus \overline{U}$
		and $U$, so it intersects the boundary $\partial U$. This implies that $\partial U$ contains at least $k+1$ points. A contradiction.
	\end{proof}
	
	Let $k\in \mathbb{N}$ such that $p_{n}\leq k$ for all $n$. Set $Y=\{y_{n}: n\in \mathbb{N}\}$. Then $Y$ is infinite with $Y\subset \textrm{Fix}(f^{k!})$. By considering $g=f^{k!}$ instead of $f$, we can assume that $Y\subset \textrm{Fix}(f)\cap \omega_{f}(x)$.
	Since $\omega_{f}(x)$ is infinite, so by Theorem \ref{c25}, $\omega_{f}(x)\nsubseteq P(f)$. Hence there is a point $z\in \omega_{f}(x)\setminus  P(f)$ and thus $(f^{-k}(z))_{k\geq 0}$ is pairwise disjoint. Fix a negative orbit $(z_{n})_n$ of $z$: $z_{0}=z$ and for any $n\geq 0$ $f(z_{n+1})=z_{n}$ with $z_{n}\in \omega_{f}(x)$ and denote by $Z$ the set of limit points of the sequence $(z_{n})_{n\geq 0}$. Clearly $Z\subset \omega_{f}(x)$.\\

	\textbf{Claim 3}: $Y\nsubseteq Z$.
	\begin{proof}
		Assume that $Y\subset Z$. Since $y_1\neq a$, there are disjoint open neighborhoods $O_{a}$ (resp. $O_{1}$) of $a$ (resp. $y_1$) of $X$ such that the boundary of $O_{a}$ is finite; set $s= \textrm{card}(\partial(O_a))$. As $(y_{n})_{n\geq 0}$ converges to $a$, there exists some $N>1$ such that $\{y_N,\dots y_{N+1}, \dots y_{N+s}\}\subset O_{a}$. As $X$ is locally arcwise connected and $Y\subset Z$, we can find $m_{0}<m_{1}<\dots<m_{s}$ and a family $(I_{j})_{0\leq j\leq s}$ of pairwise disjoint arcs in $O_a$ joining $y_{N+j}$ and $z_{m_j}$. As $y_{1} \in Z\cap \textrm{Fix}(f)$, then there is $p>0$ such that for any $0\leq j\leq s$, $z_{m_{j}+p}\in O_{1}$. By monotonicity of $f$, the $f^{-p}(I_{j})$ ($0\leq j\leq s$) are pairwise disjoint continua, each of them contains  $y_{N+j}\in O_a$ and  $z_{m_{j}+p}\in O_1$.  This implies that the boundary $\partial O_a$ contains at least $s+1$ points. A contradiction.
	\end{proof}
	\medskip
	
	\textit{End of the proof of Theorem \ref{t21}.} Let $y: =y_N\in Y\setminus Z$, for some $N>0$. We can find an open neighborhood $O_{y}$ of $y$ in $X$  with finite boundary such that $\overline{\{z_{n}:n\geq 0\}} \cap O_{y}=\emptyset$. Set $s=\textrm{card}(\partial O_{y})$. As $y\in \omega_{f}(x) \cap \textrm{Fix}(f)$ we can find $m>0$ such that $\{f^{m}(x),f^{m+1}(x),\dots, f^{m+s}(x)\} \subset O_{y}$. The negative orbit $(z_{n})_{n\geq 0}$ of $z$ is infinite, so $(z_{s},z_{s-1},\dots,z)$ is family of a pairwise distinct points of $\omega_{f}(x)$. We can find an integer $p_{1}>m$ and a family of pairwise disjoint arcs $(I_{j})_{0\leq j\leq s}$ each of which joins $z_{s-j}$ to $f^{p_{1}+j}(x)$. It follows that the $(f^{-(p_{1}-m)}(I_{j}))_{0\leq j\leq s}$ is family of pairwise disjoint continua each of which meets $O_{y}$ in at least $f^{m+j}(x)$ and meets $X\setminus O_{y}$ in at least $z_{s-j+p_{1}-m}$. Hence they meet the boundary $\partial O_a$ in at least $s+1$ points. A contradiction.
	In result, $\omega_f(x)$ is itself minimal.
	This ends the proof of Theorem \ref{t21}.
\end{proof}
\medskip

\begin{cor}[\cite{a}, \cite{n}] \label{c22}
If $f$ is a monotone map on a local dendrite (resp. $f$ is a homeomorphism on a regular curve), then each $\omega$-limit set of $f$ is a minimal set.
\end{cor}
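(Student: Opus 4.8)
The plan is to deduce both assertions as immediate specializations of Theorem \ref{t21}, since each of the two hypotheses falls under the general hypotheses of that theorem. In other words, once the general result for monotone maps on regular curves is in hand, the corollary requires no new dynamical argument whatsoever, only the verification that each listed setting is a particular case of ``monotone self mapping of a regular curve''.

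For the first (local dendrite) case, I would invoke the fact recalled in the Preliminaries that every local dendrite is a regular curve (cf. \cite{Kur}, p. 303). Hence a local dendrite $X$ equipped with a monotone map $f$ is, in particular, a regular curve together with a monotone self mapping, and Theorem \ref{t21} applies directly to conclude that each $\omega$-limit set of $f$ is minimal.

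For the second (homeomorphism) case, the one small point I would verify is that every homeomorphism is monotone in the sense of Definition \ref{def}: if $f$ is a homeomorphism and $C$ is a connected subset, then $f^{-1}(C)$ is the image of the connected set $C$ under the continuous map $f^{-1}$, and is therefore connected. Thus a homeomorphism of a regular curve is a monotone self mapping of it, and Theorem \ref{t21} once more yields the minimality of every $\omega$-limit set.

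Since both reductions match the hypotheses of Theorem \ref{t21} verbatim, no genuine obstacle arises; the entire weight of the argument rests in the theorem, and the corollary merely records that the earlier results of Abdelli \cite{a} (monotone maps on local dendrites) and Naghmouchi \cite{n} (homeomorphisms on regular curves) are both recovered as special instances of the present, more general theorem.
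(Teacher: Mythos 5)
Your proposal is correct and is exactly the argument the paper intends: the corollary is stated without proof as an immediate consequence of Theorem \ref{t21}, using that every local dendrite is a regular curve and that every homeomorphism is monotone. Your explicit verification of these two reductions is accurate and complete.
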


 \begin{cor}\label{c23} Let $f$ be a monotone map on a regular curve $X$. Then $\Lambda(f)=R(f)=\textrm{AP}(f)$.
 \end{cor}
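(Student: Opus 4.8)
The plan is to exploit the inclusion chain $P(f)\subseteq \textrm{AP}(f)\subseteq R(f)\subseteq \Lambda(f)\subseteq X_{\infty}$ recorded in the Preliminaries, which already supplies three of the four equalities for free. Since that chain gives $\textrm{AP}(f)\subseteq R(f)\subseteq \Lambda(f)$, it suffices to establish the single reverse inclusion $\Lambda(f)\subseteq \textrm{AP}(f)$; combined with the chain this forces $\Lambda(f)=R(f)=\textrm{AP}(f)$ at once.

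To prove $\Lambda(f)\subseteq \textrm{AP}(f)$, fix $x\in \Lambda(f)$. By definition of $\Lambda(f)$ there is a point $y\in X$ with $x\in \omega_f(y)$. This is where the work of the previous section enters: by Theorem \ref{t21}, $\omega_f(y)$ is a minimal set. Since $X$ is compact, $\omega_f(y)$ is a non-empty closed $f$-invariant set, and $\overline{\textrm{Orb}_f(x)}$ is a non-empty closed $f$-invariant subset of it containing $x$. Minimality of $\omega_f(y)$ then forces $\overline{\textrm{Orb}_f(x)}=\omega_f(y)$, so that $\overline{\textrm{Orb}_f(x)}$ is itself minimal.

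It remains to pass from ``orbit closure is minimal'' to ``the point is almost periodic'', which is the classical Birkhoff characterization of uniform recurrence: in a compact system a point $x$ is almost periodic precisely when $\overline{\textrm{Orb}_f(x)}$ is minimal. For completeness the argument is routine: given a neighborhood $U$ of $x$, the invariance and minimality of $M:=\overline{\textrm{Orb}_f(x)}$ give $\bigcup_{n\geq 0} f^{-n}(U\cap M)=M$; compactness of $M$ extracts a finite subcover indexed by $0\leq n\leq N$, and applying this to the points $f^{k}(x)\in M$ yields, for every $k\in\mathbb{N}$, some $0\leq i\leq N$ with $f^{i+k}(x)\in U$. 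Hence $x\in \textrm{AP}(f)$, which completes the inclusion $\Lambda(f)\subseteq \textrm{AP}(f)$ and the proof.

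The only genuine obstacle has already been cleared: it is Theorem \ref{t21}, that every $\omega$-limit set of a monotone map on a regular curve is minimal. Once that deep input is in hand, the corollary is essentially a formal consequence of standard minimality theory and uses no further special properties of regular curves; I expect the write-up to be short, with the only point demanding a line of care being the identification $\overline{\textrm{Orb}_f(x)}=\omega_f(y)$ via minimality.
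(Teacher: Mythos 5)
Your proposal is correct and is essentially the argument the paper intends: the corollary is stated without proof as an immediate consequence of Theorem \ref{t21}, namely that $\Lambda(f)$ is a union of $\omega$-limit sets, each of which is minimal, together with the classical fact that every point whose orbit closure is minimal is almost periodic. Your write-up supplies exactly these steps (including the small verification that $\overline{\textrm{Orb}_f(x)}=\omega_f(y)$ and the Birkhoff compactness argument), so there is nothing to add.
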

 \medskip

 \section{\bf The basin of attraction of an infinite minimal set}

Let $f$ be a monotone map on a regular curve $X$. If $M$ is an infinite minimal set of $f$, we call  $A(M)=\{x\in X: \ \omega_f(x)= M \}$ the \textit{basin of attraction of} $M$.
\medskip

\begin{thm}\label{amjde}
  Let $X$ be a regular curve and $f$ a monotone self mapping of $X$. If $M$ is an infinite minimal set of $f$, then $A(M)$ is $f$-invariant and closed in $X$.
\end{thm}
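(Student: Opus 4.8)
The plan is to handle the two assertions separately, since the invariance is immediate and the closedness carries all the weight. For invariance, I would note the general identity $\omega_f(f(x))=\omega_f(x)$, valid because the orbit of $f(x)$ is the orbit of $x$ with its initial term deleted; consequently, if $x\in A(M)$ then $\omega_f(f(x))=\omega_f(x)=M$, so $f(A(M))\subseteq A(M)$. For closedness, I would take $x\in\overline{A(M)}$ and a sequence $x_m\in A(M)$ with $x_m\to x$. By Theorem \ref{t21}, $\omega_f(x)$ is a minimal set, so it suffices to prove $\omega_f(x)\cap M\neq\emptyset$: two minimal sets that meet must coincide, because their intersection is a nonempty closed $f$-invariant subset of each (and every minimal set, being such a subset of itself, is strongly invariant, $f(M)=M$). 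I would therefore argue by contradiction, assuming $\omega_f(x)\cap M=\emptyset$.

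Under this assumption I would separate the two disjoint compact sets using the regular curve structure: choose an open $U\supseteq M$ with finite boundary, say $\mathrm{card}(\partial U)=k$, and $\overline{U}\cap\omega_f(x)=\emptyset$ (covering the compact set $M$ by finitely many finite-boundary neighborhoods avoiding $\omega_f(x)$ and taking their union). Since $\omega_f(x)\subseteq X\setminus\overline{U}$, there is an $N$ with $f^{n}(x)\notin\overline{U}$ for all $n\geq N$; because only finitely many iterates are involved, continuity of $f^{N},\dots,f^{N+k}$ at $x$ gives, for all large $m$, that $f^{N+i}(x_m)\notin\overline{U}$ for $0\leq i\leq k$. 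This is the mechanism that forces the early part of each nearby orbit to sit outside $\overline{U}$, while the point $z$ below will be used to pin the late part inside $U$.

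The core construction then mirrors the proof of Theorem \ref{t21}. Fix $z\in M$; as $M$ is an infinite minimal set its orbit is infinite, so $z,f(z),\dots,f^{k}(z)$ are distinct, and I would pick pairwise disjoint arcwise connected neighborhoods $W_0,\dots,W_k\subseteq U$ of them. Since $z\in\omega_f(x_m)=M$, for large $m$ there is $s\geq N$ with $f^{s+i}(x_m)\in W_i$ for all $i$; let $J_i\subseteq W_i$ be an arc joining $f^{i}(z)$ to $f^{s+i}(x_m)$, so the $J_i$ are pairwise disjoint. Setting $C_i:=f^{-(s-N)}(J_i)$, monotonicity of $f^{s-N}$ makes each $C_i$ a continuum and keeps them pairwise disjoint. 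Each $C_i$ contains $f^{N+i}(x_m)$ (as $f^{s-N}(f^{N+i}(x_m))=f^{s+i}(x_m)\in J_i$), a point of $X\setminus\overline{U}$; and, using $f^{s-N}(M)=M$, there is $w_i\in M$ with $f^{s-N}(w_i)=f^{i}(z)\in J_i$, so $w_i\in C_i\cap M\subseteq C_i\cap U$. Thus every $C_i$ meets both $U$ and $X\setminus\overline{U}$, hence meets $\partial U$, and disjointness yields $k+1$ distinct points of $\partial U$, contradicting $\mathrm{card}(\partial U)=k$. This forces $\omega_f(x)\cap M\neq\emptyset$, whence $\omega_f(x)=M$ and $x\in A(M)$.

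I expect the main obstacle to be exactly this construction of $k+1$ pairwise disjoint continua straddling $\partial U$: the delicate points are arranging a single large index $m$ for which the early iterates $f^{N+i}(x_m)$ are already expelled from $\overline{U}$ (via continuity) while a late return time $s$ places $f^{s+i}(x_m)$ near the $z$-orbit inside $U$ (via $z\in\omega_f(x_m)$), and then verifying that the monotone pullback $f^{-(s-N)}$ simultaneously captures $f^{N+i}(x_m)$ outside and a point $w_i\in M$ inside. The auxiliary separation step producing the finite-boundary neighborhood $U$ disjoint from $\omega_f(x)$ is routine but should be stated carefully.
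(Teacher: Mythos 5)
Your proof is correct and follows essentially the same strategy as the paper's: reduce closedness to showing $\omega_f(x)\cap M\neq\emptyset$ via Theorem \ref{t21}, then derive a contradiction by producing $k+1$ pairwise disjoint monotone preimages of arcs, each forced to cross the finite boundary of a separating neighborhood. The only (immaterial) difference is that you enclose $M$ in the finite-boundary neighborhood and push a block of iterates of $x_m$ outside it, whereas the paper encloses $L=\overline{O_f(x)}$ and keeps the early iterates of $x_m$ inside.
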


\begin{proof}
		From the equality $\omega_{f}(f(x))=\omega_{f}(x)$ for any $x\in X$, it follows that $A(M)$ is $f$-invariant.
		Let $(x_{n})_{n\geq0}$ be a sequence of $A(M)$ that converges to some $x\in X$. Assume that $x\notin A(M)$ that is $\omega_{f}(x)\neq M$. Then by Theorem \ref{t21}, $\omega_{f}(x)\cap M = \emptyset$. Set $L=\overline{O_{f}(x)}$. Then there exists an open neighborhood $U_{L}$ of $L$ with finite boundary such that $M\cap U_{L}=\emptyset$. We let $s = \textrm{card}(\partial U_L)$. As $f$ is continuous, we can find $N\geq 0$ such that $\{x_{N},f(x_{N}),\dots,f^{s}(x_{N})\} \subset U_{L}$.  Let $y\in M$, as $M$ is an infinite minimal set, then $\{y,f(y),\dots f^{s}(y)\}$ are pairwise distinct, so we may pick $s+1$ pairwise disjoint arcwise connected neighborhoods $(V_{f^{i}(y)})_{0\leq i\leq s}$ of $(f^{i}(y))_{0\leq i\leq s}$. Recall that $\omega_{f}(x_{N})=M$, then we can find $k>s$ and a family of pairwise disjoint arcs $(I_{j})_{0\leq j\leq s}$ in $V_{f^{i}(y)}$, each of which with endpoints $f^{k+j}(x_{N})$ and $f^{j}(y)$. By monotonicity of $f$, the $(f^{-k}(I_{j}))_{0\leq j\leq s}$ is a family of pairwise disjoint continua each of which meets $U_{L}$ in at least $f^{j}(x_{N})$ and meets $X\setminus U_{L}$ in at least a point of $M$ in $f^{-k}(f^{j}(y))$. Hence these continua meet the boundary of $U_{L}$ in at least $s+1$ points, which is a contradiction.\\
	This finish the proof of Theorem \ref{amjde}.
\end{proof}
\medskip

\begin{rem}\label{r1} \rm{ (1) In addition to that $A(M)$ is $f$-invariant, we also have that \\ $f^{-1}(A(M))\subset A(M)$.\\		
(2) Theorem \ref{amjde} is not true in general when $M$ is finite (see \cite{n})}.
\end{rem}
\medskip

\begin{cor} Let $X$ be a regular curve, $f$ a monotone self mapping of $X$ and let $M$ be an infinite minimal set of $f$. If $M$ is connected then so is $A(M)$.
\end{cor}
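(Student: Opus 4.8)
The plan is to realize $A(M)$ as a decreasing intersection of connected open sets and then play the finite-boundary property of the regular curve against monotonicity. First I would collect the ingredients already available: by Theorem \ref{amjde} the set $A(M)$ is closed and $f$-invariant, and by Remark \ref{r1} it satisfies $f^{-1}(A(M))\subseteq A(M)$; moreover, by Theorem \ref{t21} every $\omega$-limit set is minimal, so for any $x\in X$ one has $\omega_f(x)\cap M\neq\emptyset$ if and only if $\omega_f(x)=M$ (two minimal sets are equal or disjoint). Since a regular curve is locally connected and $M$ is a continuum, I would fix a decreasing sequence $(V_n)_{n\geq 1}$ of connected open neighbourhoods of $M$ with $\bigcap_n V_n=M$ (take $V_n$ to be the component of the $1/n$-neighbourhood of $M$ that contains $M$), and set
\[
G_n=\bigcup_{k\geq 0}f^{-k}(V_n).
\]

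Next I would establish the two structural facts about $G_n$. Each $G_n$ is open, and it is connected: since $f^k$ is monotone, $f^{-k}(V_n)$ is connected for the connected set $V_n$ (Definition \ref{def}), and it contains $M$ because $f^k(M)=M\subseteq V_n$; hence $G_n$ is a union of connected sets all containing $M$, so it is connected. The sequence $(G_n)_n$ is decreasing because $(V_n)_n$ is. The key identity is $A(M)=\bigcap_n G_n$. The inclusion $A(M)\subseteq G_n$ is immediate, since $\omega_f(x)=M\subseteq V_n$ forces some forward iterate of $x$ into $V_n$. For the reverse inclusion I would take $x\in\bigcap_n G_n$, produce for each $n$ an iterate $f^{k_n}(x)\in V_n$ lying within $1/n$ of $M$, and conclude $\omega_f(x)\cap M\neq\emptyset$; the minimality dichotomy above then upgrades this to $\omega_f(x)=M$, i.e. $x\in A(M)$.

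With this in hand, connectedness would follow by contradiction. Suppose $A(M)=P\sqcup Q$ with $P,Q$ nonempty, compact and disjoint, and $M\subseteq P$. Using the regular-curve property I would choose an open $U\supseteq P$ with finite boundary such that $\overline{U}\cap Q=\emptyset$; then $\partial U\cap A(M)=\emptyset$, because $\partial U$ meets neither $P\subseteq U$ nor $Q\subseteq X\setminus\overline{U}$. Each $G_n$ is connected and meets both $U$ (it contains $M$) and $X\setminus\overline{U}$ (it contains $Q$), so $G_n\cap\partial U\neq\emptyset$. As $\partial U$ is finite and the $G_n$ decrease, a pigeonhole argument gives a single $b\in\partial U$ lying in $G_n$ for infinitely many $n$, hence in every $G_n$, hence $b\in\bigcap_n G_n=A(M)$, contradicting $\partial U\cap A(M)=\emptyset$.

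The main obstacle is the identity $A(M)=\bigcap_n G_n$, specifically its nontrivial inclusion $\bigcap_n G_n\subseteq A(M)$: from the mere fact that some iterate of $x$ enters each $V_n$ one must recover $\omega_f(x)=M$, and this is exactly where Theorem \ref{t21} is indispensable, since it forces $\omega_f(x)$ to be minimal and therefore to equal $M$ as soon as it meets it. The two remaining delicate points are the construction of a decreasing connected neighbourhood basis of the continuum $M$ (using local connectedness of the regular curve) and the monotonicity step ensuring each $G_n$ is connected; once these are secured, the finiteness of $\partial U$ converts the connectedness of the $G_n$ into the desired contradiction by pure counting, so I expect no interaction with the null-family estimate of Proposition \ref{Fs} to be needed.
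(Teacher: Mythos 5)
Your argument is correct, and it reorganizes the proof around a structural identity that the paper never states: $A(M)=\bigcap_{n}G_n$ with $G_n=\bigcup_{k\geq 0}f^{-k}(V_n)$ open and connected. The paper argues pointwise instead: it fixes $x,y$ in two distinct components of $A(M)$, proves as a preliminary claim that some open $U\supseteq A(M)$ with finite boundary contains no subcontinuum joining $x$ and $y$, and then pulls back arcs $I_n$ joining $f^{m_n}(x)$ to $f^{m_n}(y)$ inside a connected neighbourhood basis of $M$ to obtain subcontinua $f^{-m_n}(I_n)$ containing both $x$ and $y$; these must cross $\partial U$, pigeonhole produces $z\in\partial U$ with $f^{m_n}(z)$ entering every neighbourhood of $M$, and Theorem \ref{t21} forces $\omega_f(z)=M$, i.e. $z\in A(M)\subseteq U$, a contradiction. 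The underlying mechanism is thus the same in both proofs (monotonicity makes pullbacks of connected neighbourhoods of $M$ connected; finite boundary plus pigeonhole traps a boundary point; minimality of $\omega$-limit sets puts that point into $A(M)$), but the routes differ: you prove the extra identity $\bigcap_n G_n\subseteq A(M)$ and are then rewarded with a separation argument that is pure point-set topology applied to nested connected open sets, whereas the paper trades that identity for its claim about subcontinua joining $x$ and $y$. One small point you should spell out in the reverse inclusion: from $f^{k_n}(x)\in V_n$ you may only conclude $\omega_f(x)\cap M\neq\emptyset$ directly when the exponents $k_n$ are unbounded; if they admit a bounded subsequence you instead get $f^{k}(x)\in\bigcap_n V_n=M$ for some fixed $k$, and then $\omega_f(x)=\omega_f(f^{k}(x))=M$ by minimality of $M$. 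Both cases land in $A(M)$, so this is a cosmetic rather than a substantive gap, and the rest of your proof (in particular the choice of $U\supseteq P$ with finite boundary and $\overline{U}\cap Q=\emptyset$, and the observation that a decreasing intersection of connected open sets need not be connected without such an argument) is sound.
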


\begin{proof}
	Suppose that $A(M)$ is not connected then there exist at least two connected components $C_{1}$ and $C_{2}$ of $A(M)$. Let $x\in C_1$, $y\in C_2$.
	\medskip
	
	\textbf{Claim.} There exists an open set $U$ of $X$ with finite boundary such that $A(M)\subset U$ and for any sub-continuum $C$ of $X$ that contains $x, y$ we have $C\nsubseteq U$.
	
	Indeed, by Theorem \ref{amjde}, $A(M)$ is a compact subset of $X$. So we can find a decreasing sequence of open sets $(O_{n})_{n\geq 0}$ with finite boundaries such that $A(M)=\displaystyle\bigcap_{n\geq 0}O_{n}$. If for infinitely many $n\geq 0$, there exists a sub-continuum $K_{n}$ of $X$ containing $x,y$ such that $K_{n}\subset O_{n}$, then there is a sub-continuum $K\subset A(M)$ containing $x$ and $y$ (it suffices to select a suitable converging subsequence of $(K_{n})_{n\geq 0}$), a contradiction with the fact that either $K\subset C_1$ or $K\subset C_2$. Hence for some $N$ and for any sub-continuum $K$ of $X$ containing $x,y$, we have $K\nsubseteq O_{n}$ for any $n\geq N$. Choose then $U=O_N$. As $M$ is a sub-continuum of $X$, we can find a decreasing sequence of arcwise connected open sets $(U_{n})_{n\geq 0}$ such that $M = \displaystyle\bigcap_{n\geq 0}U_{n}$.
	Since $\omega_{f}(x) = \omega_{f}(y) = M \subset U_{n}$ for any $n\geq 0$, there exists $k_{n}\geq 0$ such that for any $k\geq k_{n}$ we have $\{f^{k}(x), f^{k}(y)\} \subset U_{n}$. Then we can find an increasing sequence of integers $m_{n}$ such that for any $n\geq 0$ we have $\{f^{m_{n}}(x),f^{m_{n}}(y)\} \subset U_{n}$. Let $I_n$ be an arc in $U_n$ joining $f^{m_{n}}(x)$ and $f^{m_{n}}(y)$. Then $f^{-m_{n}}(I_{n})$ is a sub-continuum of $X$ that contains $x$ and $y$. By the claim above,  $f^{-m_{n}}(I_{n})\nsubseteq U$. As $x, y\in U$, $f^{-m_{n}}(I_{n})$ meets the finite boundary $\partial U$. Therefore there exists $z\in \partial U$ such that $z\in f^{-m_{n}}(I_{n})$ for infinitely many $n$. Hence for infinitely many $n$, we have $f^{m_{n}}(z)\in I_{n}\subset U_{n}$ and so $\omega_{f}(z)=M$. It follows that $z\in A(M)\subset U$, this contradicts the fact that  $z\in \partial U$.
\end{proof}

 In \cite{n2}, Naghmouchi proved that homeomorphisms on regular curves without periodic points, have a unique minimal set. In \cite{Aym}, Daghar showed that this later is either a circle or homeomorphic to a Cantor set. In this setting, does this last result remains true for monotone maps on regular curves without periodic points?

A partial answer to the above question is given by the following.

\begin{cor} Let $X$ be a regular curve and $f$ a monotone self mapping of $X$,
	If $P(f)=\emptyset$, then there exist one or uncountably many minimal sets. Moreover at most a finite number of them are connected.
\end{cor}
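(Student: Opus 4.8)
The plan is to realize $X$ as a partition into the basins of attraction of its minimal sets and then to apply a classical decomposition theorem for continua. First I would record that, since $P(f)=\emptyset$, no minimal set can be finite: a finite minimal set is a periodic orbit and would produce periodic points. Hence every minimal set $M$ of $f$ is infinite, and by Theorem \ref{amjde} its basin $A(M)$ is a nonempty closed subset of $X$. By Theorem \ref{t21}, for every $x\in X$ the set $\omega_f(x)$ is a minimal set, so $x\in A(\omega_f(x))$; consequently $X=\bigcup_{M}A(M)$, the union ranging over all minimal sets $M$. Moreover distinct minimal sets are disjoint (an intersection would be a smaller closed invariant set), so the assignment $x\mapsto\omega_f(x)$ gives each $x$ a unique minimal set and the family $\{A(M)\}_{M}$ is a partition of $X$ into pairwise disjoint nonempty closed sets, one for each minimal set.

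For the first assertion I would invoke Sierpi\'nski's theorem: a nondegenerate continuum cannot be written as a union of countably many, but at least two, pairwise disjoint nonempty closed sets. Here $X$ is a nondegenerate continuum, since a one-point space would carry a fixed point, contrary to $P(f)=\emptyset$. If there were finitely many, at least two, or countably infinitely many minimal sets, then the partition $X=\bigsqcup_{M}A(M)$ into disjoint nonempty closed basins would contradict Sierpi\'nski's theorem. Since $X$ is compact there is always at least one minimal set, so the number of minimal sets is forced to be exactly one or uncountable.

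For the second assertion I would argue by contradiction using Proposition \ref{Fs}. The connected minimal sets are pairwise disjoint nondegenerate subcontinua of $X$, so if there were infinitely many of them, Proposition \ref{Fs} would yield a sequence $(C_{n})_{n\geq 0}$ of distinct connected minimal sets with $\textrm{diam}(C_{n})\to 0$. Passing to a subsequence in the compact space $(C(X),d_{H})$, I would obtain $C_{n}\to\{p\}$ for some $p\in X$. Choosing $p_{n}\in C_{n}$ with $p_{n}\to p$ and using that minimal sets are strongly invariant, i.e. $f(C_{n})=C_{n}$, gives $f(p_{n})\in C_{n}$, whence $f(p_{n})\to p$; continuity then forces $f(p)=p$, a fixed point contradicting $P(f)=\emptyset$. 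Hence only finitely many minimal sets are connected. Note that this argument never needs to identify the connected minimal sets as circles; it only uses their being nondegenerate subcontinua.

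The main obstacle is the passage from ``infinitely many'' to ``uncountably many'' minimal sets: a naive connectedness argument only excludes a finite partition of $X$ into at least two clopen pieces, and upgrading this so as to also rule out a countably infinite partition is exactly the point where Sierpi\'nski's theorem is indispensable. The other delicate point is guaranteeing that every basin is genuinely closed, which is why the hypothesis $P(f)=\emptyset$, forcing each minimal set to be infinite so that Theorem \ref{amjde} applies, must be used at the very start.
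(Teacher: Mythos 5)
Your proposal is correct and follows essentially the same route as the paper: the decomposition $X=\bigcup_{M}A(M)$ into disjoint closed basins (closed by Theorem \ref{amjde} since $P(f)=\emptyset$ forces every minimal set to be infinite), followed by Sierpi\'nski's theorem on countable decompositions of continua (the paper cites it as Kuratowski, Theorem 6, p.\ 173), and then Proposition \ref{Fs} to show that infinitely many pairwise disjoint connected minimal sets would shrink to a fixed point. Your write-up is in fact slightly more explicit than the paper's on the nondegeneracy of $X$ and on why the limit point of the shrinking minimal sets is fixed.
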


\begin{proof}
Let $(M_{i})_{i\in I}$ be a family of minimal sets of $(X,f)$. As $P(f)=\emptyset$, then for any $i\in I,\; M_{i}$ is an infinite minimal set and by Theorem \ref{amjde}, $A(M_{i})$ is a closed set of $X$. As the $(A(M_{i}))_{i\in I}$ are disjoint and $X=\displaystyle\bigcup_{i\in I}A(M_{i})$, so by (\cite{Kur}, Theorem 6, p. 173), $I$ is either uncountable or reduced to a point. For the rest, assume that there is infinitely many connected minimal sets $(M_n)_{n>0}$. Then pick a sequence $(M_{n})_{n>0}$ of them converging (in the Hausdorff metric) to a continuum $C$ of $X$. Then by Proposition \ref{Fs}, we have $\displaystyle\lim_{n\to +\infty}\textrm{diam}(M_{n})=0$. Hence $C$ is reduced to a fixed point, which is a contradiction.
\end{proof}

 \section{\bf The $\alpha$-limit sets for monotone maps on regular curves}

The aim of this section is to investigate the $\alpha$-limit sets. First recall the definition of the $\alpha$-limit set. Let $X$ be a compact metric space, $f: X\longrightarrow X$ a continuous map and $x\in X$. The set $\alpha_{f}(x)= \displaystyle\bigcap_{k\geq 0}\displaystyle\overline{\bigcup_{n\geq k}f^{-n}(x)}$ is called the $\alpha$-limit set of $x$. Equivalently a point $y\in \alpha_{f}(x)$ if and only if there exist an increasing sequence of positive integers $(n_{k})_{k\in \mathbb{N}}$ and a sequence of points  $(x_{k})_{k\geq 0}$ such that
$f^{n_{k}}(x_{k})=x$ and $\displaystyle\lim_{k\rightarrow +\infty}x_{k}=y.$  It is much harder to deal with $\alpha$-limit sets since there are many choices for points in a backward orbit. When $f$ is a homeomorphism, $\alpha_{f}(x)= \omega_{f^{-1}}(x)$.
\medskip

 Balibrea et al. \cite{bgl} considered exactly one branch of the backward orbit as follows.

\begin{defn}
	Let $x\in X$. A \textit{negative orbit} of $x$ is a sequence $(x_n)_n$ of points in $X$ such that $x_0=x$ and $f(x_{n+1}) = x_n$, for every $n\geq 0$. The $\alpha$-limit set of $(x_n)_{n\geq 0}$ denoted by $\alpha_f((x_n)_n)$ is the set of all limit points of $(x_n)_{n\geq 0}$.
\end{defn}

It is clear that $\alpha_f((x_n)_n)\subset \alpha_{f}(x)$. The inclusion can be strict;

 Balibrea et al. \cite{bgl} gave an example of a pair ($X, \sigma$), where $\sigma$ is the shift map on $\{0,1\}^{\mathbb{N}}$, for any point $x$, $\alpha_{f}(x)= X$ and there are two negative orbits $(x_n)_n$ and $(y_n)_n$ of $x$ under $f$ such that $\alpha_f((x_n)_n)$ consists of a single point and $\alpha_f((y_n)_n) = X$.
In the present paper, we show that the above inclusions can also be strict for onto monotone maps on regular curves (see  Example \ref{CEM}). Notice that we have the following equivalence:\\

(i) $\alpha_{f}(x)\neq \emptyset$, (ii) $x\in X_{\infty}$.
In particular, if $f$ is onto then $\alpha_{f}(x)\neq \emptyset$ for every $x\in X$.

\subsection{The limsup of a particular sequence of compact sets } \ 

Let $f: X\longrightarrow X$ be a continuous map of a compact space $X$ and let $(G_{n})_{n\geq 0}$ be a sequence of compact subsets of $X$ satisfying $f(G_{n+1})\subset G_{n}$, for any $n\geq 0$. We denote by $\limsup G_n =\displaystyle\bigcap_{k\geq 0}\displaystyle\overline{\bigcup_{n\geq k}G_{n}}$. A point $y\in \limsup G_n$ if and only if there exists an increasing sequence of positive integers $(n_{k})_{k\in \mathbb{N}}$ and a sequence of points  $(x_{k})_{k\geq 0}$ such that
$x_{k}\in G_{n_{k}}$ and $\displaystyle\lim_{k\rightarrow +\infty}x_{k}=y$.

 In particular:
\medskip

(1) If $x\in X_{\infty}$ and $G_n= f^{-n}(x),n\geq 0$, then $f(G_{n+1})\subset G_n$, for every $n\geq 0$ and we have $\limsup G_n = \alpha_{f}(x)$. 

(2) If $(x_{n})_{n\geq 0}$ is a negative orbit of $x$ and $G_n= \{x_n\}$, $n\geq 0$, then $f(G_{n+1})=G_{n}$ and we have $\limsup G_n= \alpha((x_{n})_{n})$.\\


First, we establish the following lemma which will be useful for most of the proofs in this section.

\begin{lem}\label{FI}
Let $(G_{n})_{n\geq 0}$ be a null sequence of compact subsets of $X$ satisfying $f(G_{n+1})\subset G_n$, for every $n\geq 0$. Then:
\begin{itemize}
	\item[(i)] If $A\in 2^{X}$ and for some sequence of positive integers $(n_{i})_{i\geq 0}$ we have $\displaystyle\lim_{i\to +\infty}d_{H}(A, G_{n_{i}})=0$, then $A$ is degenerate.
\item[(ii)] $x\in \limsup G_n$ if and only if there exists a sequence of positive integers $(n_{i})_{i\geq 0}$ such that $\displaystyle\lim_{i\to +\infty}d_{H}(x, G_{n_{i}})=0$.
\item[(iii)] If $(G_{n_{i}})_{i\geq 0}$ converges to $\{x\}$, then for any sequence of non-empty compact subsets $(A_{n_{i}})_{i\geq 0}$ of $X$ such that $A_{n_{i}}\subset G_{n_{i}}$ for any $i\geq 0$, the sequence $(A_{n_{i}})_{i\geq 0}$ also converges to $\{x\}$.
\end{itemize}
\end{lem}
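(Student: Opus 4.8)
The plan is to reduce everything to two elementary properties of the Hausdorff metric $d_H$ on $2^X$, and then to invoke the nullity hypothesis only where it is genuinely needed. The first property I would record is that the diameter is $2$-Lipschitz with respect to $d_H$, namely $\abs{\mathrm{diam}(A)-\mathrm{diam}(B)}\leq 2\, d_H(A,B)$ for all $A,B\in 2^X$; this follows by taking two points of $A$ nearly realizing $\mathrm{diam}(A)$, approximating each within $d_H(A,B)$ by a point of $B$, and applying the triangle inequality. The second is that for a singleton one has $d_H(\{x\},B)=\sup_{b\in B}d(x,b)$, since the term $d(x,B)$ in the definition of $d_H$ is dominated by this supremum. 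Throughout I will read the index sequences $(n_i)_{i\geq 0}$ as increasing (passing to a subsequence if necessary), which is the form in which they arise in the applications and in the characterization of $\limsup G_n$ quoted above.

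For (i): since $(G_n)_{n\geq 0}$ is null, $\mathrm{diam}(G_{n_i})\to 0$ as $i\to+\infty$. Combining the hypothesis $d_H(A,G_{n_i})\to 0$ with the Lipschitz bound gives $\mathrm{diam}(A)\leq \mathrm{diam}(G_{n_i})+2\,d_H(A,G_{n_i})\to 0$, so $\mathrm{diam}(A)=0$ and $A$ is a single point.

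For (ii): the backward implication is immediate from the second property, for if $d_H(\{x\},G_{n_i})\to 0$ then choosing any $x_i\in G_{n_i}$ yields $d(x,x_i)\leq d_H(\{x\},G_{n_i})\to 0$, so $x=\lim x_i$ with $x_i\in G_{n_i}$ exhibits $x\in\limsup G_n$. For the forward implication I would start from the defining data of $\limsup G_n$: an increasing $(n_k)$ and points $x_k\in G_{n_k}$ with $x_k\to x$. Then for every $b\in G_{n_k}$ one has $d(x,b)\leq d(x,x_k)+d(x_k,b)\leq d(x,x_k)+\mathrm{diam}(G_{n_k})$, whence $d_H(\{x\},G_{n_k})=\sup_{b\in G_{n_k}}d(x,b)\leq d(x,x_k)+\mathrm{diam}(G_{n_k})$; both summands tend to $0$ (the first by $x_k\to x$, the second by nullity), giving $d_H(\{x\},G_{n_k})\to 0$.

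Finally (iii) is a one-line consequence of the second property together with monotonicity of the supremum: since $A_{n_i}$ is a non-empty compact subset of $G_{n_i}$, one gets $d_H(\{x\},A_{n_i})=\sup_{a\in A_{n_i}}d(x,a)\leq \sup_{b\in G_{n_i}}d(x,b)=d_H(\{x\},G_{n_i})\to 0$. I do not expect a serious obstacle here; the only points requiring a little care are the bookkeeping in the forward direction of (ii), where nullity of $(G_n)$ is precisely what converts the pointwise convergence $x_k\to x$ into the Hausdorff convergence $G_{n_k}\to\{x\}$, and the implicit convention that the extracted index sequences are increasing (without which the statement of (i) would already fail for a bounded index sequence hitting a non-degenerate $G_N$ infinitely often).
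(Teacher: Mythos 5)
Your proof is correct and follows essentially the same route as the paper's: part (i) via the two-point/diameter estimate, part (ii) via the bound $d_{H}(\{x\},G_{n_k})\leq d(x,x_k)+\mathrm{diam}(G_{n_k})$ together with nullity, and part (iii) by dominating $d_{H}(\{x\},A_{n_i})$ by a quantity tending to $0$. The only cosmetic differences are that you package (i) as a Lipschitz estimate for the diameter (whose proof is exactly the paper's two-point argument) and (iii) as monotonicity of the supremum rather than the triangle inequality plus $d_{H}(G_{n_i},A_{n_i})\leq \mathrm{diam}(G_{n_i})$; your observation that the index sequence must tend to infinity is a point the paper leaves implicit.
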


\begin{proof} \rm{(i)} Let $a, b\in A$. As $\displaystyle\lim_{i\to +\infty}d_{H}(A, G_{n_{i}})=0$, so there exist $a_{i}, b_{i}\in G_{n_{i}}$ such that $\displaystyle\lim_{i\to +\infty}d(a_{i}, a)=0$ and $\displaystyle\lim_{i\to +\infty}d(b_{i}, b)=0$. Since $(G_{n})_{n\geq 0}$ is a null family, $\displaystyle\lim_{i\to +\infty}\textrm{diam}(G_{n_{i}})=0$. For any $i\geq 0$, we have $d(a_{i},b_{i})\leq \textrm{diam}(G_{n_{i}})$, thus $d(a,b)=0$ and so $a=b$. In result $A$ is degenerate.\\
\rm{(ii)} Let $x\in \limsup G_n$, then there exists a sequence of positive integers $(n_{i})_{i\geq 0}$ such that $x_{i}$ converges to $x$ with $x_{i}\in G_{n_{i}}$. We have $$ d_{H}(x, G_{n_{i}})\leq d_{H}(x, x_{i}) + d_{H}(x_{i},G_{n_{i}})\leq d_{H}(x, x_{i})+\textrm{diam}(G_{n_{i}}).$$ Since $\displaystyle\lim_{i\to +\infty}\textrm{diam}(G_{n_{i}}) = 0$, $\displaystyle\lim_{i\to +\infty}d_{H}(x, G_{n_{i}})=0$. The converse is trivial.\\
\rm{(iii)} We have $$d_{H}(x,A_{n_{i}}) \leq d_{H}(x,G_{n_{i}})+d_{H}(G_{n_{i}},A_{n_{i}}) \leq d_{H}(x, G_{n_{i}}) +\textrm{diam}(G_{n_{i}}).$$ As $\displaystyle\lim_{i\to +\infty}\textrm{diam}(G_{n_{i}}) = 0$ and $\displaystyle\lim_{i\to +\infty}d_{H}(x, G_{n_{i}})=0$, thus $\displaystyle\lim_{i\to +\infty}d_{H}(x,A_{n_{i}})=0$.
\end{proof}

\begin{lem} \label{l42} Let $f$ be a continuous self mapping of a compact space $X$ and $(G_{n})_{n\geq 0}$ be a null sequence of compact subsets satisfying $f(G_{n+1})\subset G_n$, for every $n\geq 0$. Then $\limsup G_n$ is non-empty, closed and strongly $f$-invariant.
\end{lem}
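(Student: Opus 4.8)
The plan is to treat the three properties in increasing order of difficulty, disposing of closedness and non-emptiness quickly and reserving the real work for strong invariance. \emph{Closedness} is immediate from the definition, since $\limsup G_n = \bigcap_{k\geq 0}\overline{\bigcup_{n\geq k}G_n}$ is an intersection of closed sets. For \emph{non-emptiness}, I would pick a point $x_n\in G_n$ for each $n$ (each $G_n$ being a non-empty compact set) and use compactness of $X$ to extract a convergent subsequence $x_{n_k}\to y$; the characterization of $\limsup G_n$ recalled just before Lemma \ref{FI} then shows $y\in\limsup G_n$.

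For \emph{strong invariance} I would establish the two inclusions $f(\limsup G_n)\subseteq\limsup G_n$ and $\limsup G_n\subseteq f(\limsup G_n)$. The first, ordinary invariance, is the easy direction: given $y\in\limsup G_n$, write $y=\lim x_k$ with $x_k\in G_{n_k}$ and $(n_k)$ strictly increasing; continuity of $f$ gives $f(x_k)\to f(y)$, while $f(x_k)\in f(G_{n_k})\subset G_{n_k-1}$ with $n_k-1$ still increasing to $+\infty$, so $f(y)\in\limsup G_n$. Note that the null hypothesis is not needed here.

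The reverse inclusion, i.e. surjectivity of $f$ onto $\limsup G_n$, is the main obstacle, and it is precisely where the null hypothesis enters. Given $y\in\limsup G_n$, Lemma \ref{FI}(ii) yields a sequence $(n_i)$ with $G_{n_i}\to\{y\}$ in the Hausdorff metric. I would then examine the shifted sets $G_{n_i+1}$: since $(2^X,d_H)$ is compact, a subsequence converges to some $A\in 2^X$, and because $n_i+1\to+\infty$ the null hypothesis, through Lemma \ref{FI}(i), forces $A=\{z\}$ to be a single point. Choosing $z_i\in G_{n_i+1}$ gives $z_i\to z$ with indices increasing to $+\infty$, so $z\in\limsup G_n$. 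It remains to check $f(z)=y$: continuity of the induced map $2^f$ gives $f(G_{n_i+1})\to\{f(z)\}$, while $f(G_{n_i+1})\subset G_{n_i}\to\{y\}$ together with Lemma \ref{FI}(iii) gives $f(G_{n_i+1})\to\{y\}$; uniqueness of Hausdorff limits then yields $f(z)=y$, so $y\in f(\limsup G_n)$. I expect this surjectivity step to be the delicate point, since it is the only place where one cannot merely push points forward but must produce a preimage; the null property is exactly what guarantees that the limit of the preimage sets $G_{n_i+1}$ collapses to a single point, making the candidate preimage $z$ well defined.
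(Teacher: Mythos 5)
Your proof is correct and follows essentially the same route as the paper's: the key inclusion $\limsup G_n\subseteq f(\limsup G_n)$ is obtained exactly as in the paper by taking $G_{n_i}\to\{y\}$ via Lemma \ref{FI}(ii), extracting a singleton limit $\{z\}$ of $(G_{n_i+1})_{i\geq 0}$ using the null hypothesis, and identifying $f(z)=y$ through the continuity of $2^{f}$ together with $f(G_{n_i+1})\subset G_{n_i}$. The only cosmetic difference is that you treat the easy inclusion pointwise (correctly observing that the null hypothesis is not needed there), whereas the paper just remarks that the same induced-map argument works using $f(G_{n_i})\subset G_{n_i-1}$.
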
	
	
\begin{proof} By compactness of $X$, it is obvious that $\limsup G_n$ is non-empty.
 Let $x\in \limsup G_n$. Then by Lemma \ref{FI}, there exists a sequence of positive integers $(n_{i})_{i\geq 0}$ such that $G_{n_{i}}$ converges to $\{x\}$. Consider now the sequence $(G_{n_{i}+1})_{i\geq 0}$. Since $(G_n)_{n\geq 0}$ is a null family, we can assume that $(G_{n_{i}+1})_{i\geq 0}$ converges to some point $\{z\}$. By Lemma \ref{FI}, $z\in \limsup G_n$. By continuity of $2^{f}$, we have $(f(G_{n_{i}+1}))_{i\geq 0}$ converges to $\{f(z)\}$. As $f(G_{n_{i}+1})\subset G_{n_{i}}$ then by Lemma  \ref{FI}, $f(z)=x$ and thus $\limsup G_n\subset f(\limsup G_n)$. The other inclusion can be proved by the same argument since $f(G_{n_{i}})\subset G_{n_{i}-1}$.
\end{proof}

\begin{cor}\label{c43}
	 Let $f: X\longrightarrow X$ be a continuous self mapping of a compact space $X$. Let $x\in X_{\infty}$. Then:
\begin{itemize} 	
\item[(i)] $\alpha_{f}(x)$ is non-empty, closed and $f$-invariant. In addition, it is strongly $f$-invariant whenever $\displaystyle\lim_{n\to +\infty}\textrm{diam}(f^{-n}(x)) =0$.
\item[(ii)] $\alpha_f((x_n)_{n\geq 0})$ is non-empty, closed and strongly $f$-invariant, for any negative orbit $(x_{n})_{n\geq 0}$ of $x$.
\end{itemize}
\end{cor}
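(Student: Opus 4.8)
The plan is to reduce both parts to the two observations recorded just before the statement, namely that with $G_n = f^{-n}(x)$ one has $\alpha_f(x) = \limsup G_n$, and that with $G_n = \{x_n\}$ one has $\alpha_f((x_n)_n) = \limsup G_n$, and then to feed these into Lemma \ref{l42} whenever the null hypothesis is actually available.

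For part (ii) this is immediate: the sequence $G_n = \{x_n\}$ consists of singletons, so it is trivially a null sequence, and the relation $f(x_{n+1}) = x_n$ gives $f(G_{n+1}) = G_n \subset G_n$. Hence Lemma \ref{l42} applies verbatim and yields that $\alpha_f((x_n)_n) = \limsup G_n$ is non-empty, closed and strongly $f$-invariant. No further work is needed here.

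For part (i) I would treat the three assertions separately, precisely because the sequence $G_n = f^{-n}(x)$ need not be null and so Lemma \ref{l42} is not directly available for the full conclusion. Since $x \in X_\infty = \bigcap_n f^n(X)$, every $f^{-n}(x)$ is non-empty; thus the sets $\overline{\bigcup_{n\geq k} G_n}$ form a decreasing sequence of non-empty compacta, whose intersection $\alpha_f(x)$ is non-empty by compactness, and it is closed as an intersection of closed sets. For $f$-invariance I would argue directly: if $z \in f^{-n}(x)$ then $f^{n-1}(f(z)) = x$, so $f(G_n) \subset G_{n-1}$; given $y \in \alpha_f(x)$, choose $x_k \in G_{n_k}$ with $x_k \to y$, whence $f(x_k) \in G_{n_k - 1}$ and $f(x_k) \to f(y)$ by continuity, so $f(y) \in \limsup G_n = \alpha_f(x)$. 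This establishes the inclusion $f(\alpha_f(x)) \subset \alpha_f(x)$ without any diameter assumption.

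Finally, the extra hypothesis $\lim_{n\to +\infty} \textrm{diam}(f^{-n}(x)) = 0$ is exactly the statement that $(G_n)_{n\geq 0}$ is a null sequence; combined with $f(G_{n+1}) \subset G_n$ it places us in the hypotheses of Lemma \ref{l42}, which then upgrades the forward invariance to strong invariance $f(\alpha_f(x)) = \alpha_f(x)$. The one point requiring care, which I regard as the only obstacle, is precisely this dichotomy: Lemma \ref{l42} must not be invoked for $G_n = f^{-n}(x)$ in the absence of the null condition, so the forward inclusion has to be proved by hand as above, whereas the reverse inclusion genuinely relies on the diameter hypothesis. Everything else is a routine application of the lemma.
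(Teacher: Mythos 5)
Your proof is correct and follows essentially the same route as the paper, which states this as an immediate consequence of Lemma \ref{l42} together with the two specializations $G_n=f^{-n}(x)$ and $G_n=\{x_n\}$ recorded just before the statement. Your only addition is the explicit hand verification of non-emptiness, closedness and plain $f$-invariance of $\alpha_f(x)$ when the null hypothesis is unavailable, which is exactly the step the paper leaves implicit, and your dichotomy (Lemma \ref{l42} only under the diameter condition) is the right one.
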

\medskip


\subsection{Case of monotone maps on regular curves} The aim of this subsection is to prove the following theorem.

\begin{thm}\label{tG31}
	Let $X$ be a regular curve and $f$ a monotone self mapping of $X$. Let $(G_{n})_{n\geq 0}$ be a null family of compact sets of $X$ such that $f(G_{n+1})\subset G_{n}$, for every $n\geq 0$. Then $\limsup G_n$ is a minimal set.
\end{thm}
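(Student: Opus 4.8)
The plan is to prove that $\limsup G_n$ is minimal by showing that it has no proper nonempty closed invariant subset, arguing by contradiction exactly in the spirit of the proof of Theorem~\ref{t21}. By Lemma~\ref{l42} we already know $L := \limsup G_n$ is nonempty, closed, and strongly $f$-invariant, so it remains to rule out a proper minimal subset $M \subsetneq L$. Suppose such an $M$ exists; as in Theorem~\ref{t21} I would split into the case where $M$ is infinite and the case where $M$ is a periodic orbit, the latter reducing (via Lemma~\ref{lmcsf} and passing to an iterate $f^N$) to the case $M = \{a\}$ a single fixed point, and invoking Theorem~\ref{c25} to guarantee that $L$ infinite forces $L \not\subseteq P(f)$ so that genuinely aperiodic structure is available.

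\textbf{The core mechanism.} The engine throughout is the same separation argument used in Section~3: choose an open neighborhood $U$ of $M$ with finite boundary $\partial U$ of cardinality $k$ and with $L \setminus \overline{U} \neq \emptyset$; produce $k+1$ pairwise disjoint continua each of which meets both $U$ and $X \setminus U$, hence meets $\partial U$, contradicting $|\partial U| = k$. The disjointness of the relevant continua is guaranteed by monotonicity of $f$ (preimages of disjoint connected sets are disjoint and connected) together with the null-family hypothesis via Proposition~\ref{Fs} and Lemma~\ref{FI}. The role formerly played by points of the orbit $f^n(x)$ landing outside $\overline{U}$ is now played by the hypothesis $L\setminus\overline{U}\neq\emptyset$: because $y\in L$ iff some subsequence $G_{n_i}$ converges to $\{y\}$ (Lemma~\ref{FI}(ii)), I can select finitely many indices $n_0 < \dots < n_k$ with $G_{n_i}$ close to a point outside $\overline U$, giving the $k+1$ ``escaping'' sets. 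I would then pull back through $f^{-(m-n_0)}$ using strong invariance $f(G_{m})\subset G_{m-1}$ to connect these $G_{n_i}$ to copies of $M$ sitting inside $\overline U$, forming the disjoint continua that cross $\partial U$.

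\textbf{Adapting the three claims.} The most delicate part of Theorem~\ref{t21} was the fixed-point case (Case 2), built on Claim 1 (existence of an exit point $y_V \in \omega_f(x)\cap\overline V$ lying in every forward image $f^j(\overline V \cap \omega_f(x))$), Claim 2 (boundedness of periods), and Claim 3 ($Y \not\subseteq Z$). I expect these to transfer almost verbatim once $\omega_f(x)$ is replaced by $L$ and the forward-image structure $y_V \in f^j(\overline V \cap \omega_f(x))$ is replaced by the fact that the $G_n$ form a backward-compatible tower: here $L$ is \emph{strongly} invariant, so $f^j(L\cap \overline V)$ and the preimages $f^{-j}(y)\cap L$ are both available, which is exactly the ingredient the argument needs (indeed strong invariance makes this direction cleaner than in Theorem~\ref{t21}, where only $f(\omega_f(x))\subseteq\omega_f(x)$ held a priori). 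Lemma~\ref{FI}(iii) is the new tool that replaces compactness-of-$X$ selections: whenever a subsequence $G_{n_i} \to \{x\}$, every choice of $A_{n_i}\subset G_{n_i}$ also converges to $\{x\}$, which lets me track exit points and their preimages inside the shrinking sets $G_n$.

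\textbf{Expected main obstacle.} The hardest point will be correctly setting up the analogue of Claim 1 in the tower setting, i.e. producing an exit point of $L$ relative to $U$ that is genuinely non-fixed (or, in Case 2.2, controlling the accumulation of periodic exit points $y_n \to a$ and their bounded periods). In Theorem~\ref{t21} the exit point came from watching the single orbit $(f^n(x))_n$ enter and leave $V$; here there is no single orbit, only the family $(G_n)_n$, so I must instead extract the exit behavior from the convergence $G_{n_i}\to\{a\}$ versus $G_{m_j}\to\{y\}$ with $y \notin \overline V$, and verify that the limiting exit point cannot coincide with $a$ (the argument that a convergent tail would force $f^{n_i+t_i+1}(x)\in V$ must be recast as: if the exit point were $a$, strong invariance and Lemma~\ref{FI}(iii) would force the neighboring sets $G_n$ to stay near $a$, contradicting escape). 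Once this single technical adaptation is in place, the disjoint-continua-meeting-$\partial U$ contradiction closes each case exactly as before, yielding that $L = \limsup G_n$ is itself minimal.
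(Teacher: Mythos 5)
Your overall mechanism (pairwise disjoint continua forced across a finite boundary, with disjointness coming from monotonicity and Proposition \ref{Fs}/Lemma \ref{FI}) is the right one, and your use of Lemma \ref{l42} to reduce to excluding a proper minimal subset $M$ matches the paper. But the plan has two concrete gaps. First, the infinite-$M$ case does not transfer ``verbatim'' from Case 1 of Theorem \ref{t21}. There the $k+1$ escape times $n_0<\dots<n_k$ of the single orbit are fixed first, the window length $r=n_k-n_0$ is read off from them, and the visiting time $m$ is then chosen afterwards, arbitrarily large. In the tower setting the pullback exponent is forced (one needs $f^{-s}(K_j)\supset G_{m+s-j}$ with $\{m+s-j:0\le j\le r\}$ covering the escape indices, so $m+s$ is pinned to the largest escape index), hence you need the block $G_{m},\dots,G_{m-r}$ tracking $z,f(z),\dots,f^r(z)$ to occur at an index $m$ \emph{below} the chosen escape indices, while such blocks are only guaranteed along a subsequence tending to infinity; and escape indices need not cluster, so you cannot simply pick $k+1$ of them in a window of bounded length arbitrarily far out. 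The paper resolves this by a genuinely different route: it first invokes Theorem \ref{t21} so that $\omega_f(t)$ is minimal for $t\in\limsup G_n$ with $\omega_f(t)\neq M$, chooses $U$ separating $M$ from $\omega_f(t)$, and thereby obtains a \emph{consecutive} block $G_n,\dots,G_{n-k}$ outside $U$; and even after concluding $\omega_f(t)=M$ for all $t$, it still needs Theorem \ref{amjde} and Proposition \ref{oinf} (giving $\alpha_f(t)=M$) plus a second separation argument to reach the contradiction. None of this appears in your outline.

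Second, your reduction of the finite case to a single fixed point via Lemma \ref{lmcsf} is not available: that lemma decomposes $\omega$-limit sets of points under $f^N$, and no analogue for $\limsup G_n$ is stated or proved. The paper does not make this reduction; it works with a general finite minimal set $M$, proves the exit-point claim (its Claim 2) by locating the last index at which the block $\bigcup_{s=0}^{j}G_{n_i-s}$ stays in $\overline{V_M}$, shows directly (its Claim 3) that some exit point $y_V$ is non-periodic, and then closes not with the bounded-periods and $Y\nsubseteq Z$ machinery of Theorem \ref{t21} but by showing $\alpha_f(y_V)\subset\overline{V}$ and deriving $y_V\in\omega_f(y_V)$, contradicting $y_V\notin P(f)$ together with finiteness of $\omega_f(y_V)$. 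So while your guiding heuristic is sound, the proposal as written would not close without these substantive additional ideas.
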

\medskip

 First we establish the following proposition.
\medskip

\begin{prop}\label{oinf} Let $X$ be a regular curve and $f$ a monotone self mapping of $X$. If $x\in X_{\infty}$ and $\omega_f(x)$ is infinite then $\omega_f(x) = \alpha_f(x)$.
\end{prop}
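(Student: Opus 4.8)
The plan is to prove the two inclusions $\omega_f(x)\subseteq\alpha_f(x)$ and $\alpha_f(x)\subseteq\omega_f(x)$, writing $M=\omega_f(x)$, which is an infinite \emph{minimal} set by Theorem \ref{t21}. The first thing I would record is that $x\notin P(f)$ (otherwise $\omega_f(x)=\mathrm{Orb}_f(x)$ would be finite), so that whenever $n\neq m$ the sets $f^{-n}(x)$ and $f^{-m}(x)$ are disjoint (a common point would force $f^{|n-m|}(x)=x$). Since $f$ is monotone each $f^{-n}(x)$ is a subcontinuum, so $(f^{-n}(x))_{n\ge0}$ is a pairwise disjoint family of continua and Proposition \ref{Fs} gives $\lim_{n}\mathrm{diam}\,f^{-n}(x)=0$. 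By Corollary \ref{c43}(i) this makes $\alpha_f(x)$ non-empty, closed and \emph{strongly} $f$-invariant, which is the structural input I will use repeatedly.

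For $\omega_f(x)\subseteq\alpha_f(x)$ I would argue softly, through the basin of attraction. Fix any $y\in\alpha_f(x)$; then there are $n_i\uparrow+\infty$ and $w_i\to y$ with $f^{n_i}(w_i)=x$. For each $i$ the forward orbit of $w_i$ is eventually that of $x$ (indeed $f^{n_i+k}(w_i)=f^{k}(x)$ for all $k\ge0$), hence $\omega_f(w_i)=\omega_f(x)=M$, i.e. $w_i\in A(M)$. Because $M$ is an infinite minimal set, Theorem \ref{amjde} tells us $A(M)$ is closed, so $y=\lim_i w_i\in A(M)$, that is $\omega_f(y)=M$. Since $\alpha_f(x)$ is closed and strongly invariant it contains $\overline{\mathrm{Orb}_f(y)}$, and therefore $M=\omega_f(y)\subseteq\alpha_f(x)$.

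The reverse inclusion $\alpha_f(x)\subseteq M$ is where the work lies. The clean case is when $x$ is recurrent, i.e. $x\in M$: then strong invariance of $M$ gives $f^{-n}(x)\cap M\neq\emptyset$ for every $n$, and since (by Lemma \ref{FI}(ii)) any $y\in\alpha_f(x)$ is a $d_H$-limit of the shrinking continua $f^{-n_i}(x)$, each of which meets the closed set $M$, the limit $y$ must lie in $M$. For the general case I would argue by contradiction: suppose $y_0\in\alpha_f(x)\setminus M$ and enclose $y_0$ in an open set $O$ with finite boundary and $\overline O\cap M=\emptyset$, putting $s=\mathrm{card}(\partial O)$. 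Using $y_0\in\alpha_f(x)$ one obtains infinitely many orders $n$ with $f^{-n}(x)\subseteq O$ (pairwise disjoint continua clustered near $y_0$), while $M\subseteq\alpha_f(x)$ together with minimality yields forward iterates $f^{t}(x)$ lying arbitrarily close to any prescribed point of $M$, all outside $\overline O$. Joining such forward iterates to points of $M$ by short arcs and pulling these arcs back by $f$ (monotone, so preimages are continua), I would manufacture more than $s$ pairwise disjoint subcontinua, each meeting $O$ (through a preimage continuum $f^{-n}(x)$) and meeting $X\setminus\overline O$ (through a preimage of an $M$-point), forcing $\partial O$ to contain more than $s$ points — a contradiction. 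This is exactly the boundary-counting scheme of Theorems \ref{t21} and \ref{amjde}.

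The main obstacle will be this last step: guaranteeing that the pulled-back continua are \emph{pairwise disjoint}. In Theorem \ref{amjde} disjointness was automatic because all arcs were pulled back by a single power $f^{-k}$ (the gap between $f^{k+j}(x_N)$ and $f^{j}(x_N)$ being the constant $k$), whereas here the preimage continua near $y_0$ sit at different depths $n$, so matching them with forward visits to distinct, well-separated points of $M$ requires a careful simultaneous choice of the backward orders and the forward times — so that either the total pull-back powers coincide, or the resulting continua cannot re-cross into one another. Controlling this index bookkeeping, equivalently exploiting the (syndetic) return structure of the forward orbit in the minimal set $M$ to realize the required depths, is the delicate heart of the argument; once disjointness is secured, the finiteness of $\partial O$ closes the proof and yields $\alpha_f(x)=M=\omega_f(x)$.
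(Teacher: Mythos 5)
Your first inclusion, $\omega_f(x)\subseteq\alpha_f(x)$, is correct and is exactly the paper's argument: $A(M)$ is closed and backward invariant (Theorem \ref{amjde} and Remark \ref{r1}), so $\alpha_f(x)\subseteq A(M)$; then for $y\in\alpha_f(x)$, invariance and closedness of $\alpha_f(x)$ give $M=\omega_f(y)\subseteq\alpha_f(x)$. Your preliminary observations ($x\notin P(f)$, the $f^{-n}(x)$ pairwise disjoint, hence a null family, hence $\alpha_f(x)$ strongly invariant) are also correct.

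The reverse inclusion, however, is a genuine gap, and you have named it yourself: you never secure the pairwise disjointness of the pulled-back continua, and without that the boundary-counting argument does not close. The paper's device is the one you half-guessed (``the total pull-back powers coincide''), but with the finite-boundary neighborhood placed around $M$, not around the stray point. Concretely: take $a\in\alpha_f(x)\setminus M$ and an open $O_M\supseteq M$ with $\mathrm{card}(\partial O_M)=s$ and $a\notin\overline{O_M}$. From $a\in\alpha_f(x)$ extract $s+1$ depths $n_s<\dots<n_0$ and points $x_{n_j}\notin O_M$ with $f^{n_j}(x_{n_j})=x$; set $r=n_0-n_s$. Pick $t\in M$ and $k\geq n_0$ with $f^{k+j}(x)\in O_M$ for $0\leq j\leq r$, and pairwise disjoint arcs $I_j\subset O_M$ joining $f^{k+j}(x)$ to $f^j(t)$ (possible since $M$ is infinite, so the $f^j(t)$ are distinct). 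Now pull \emph{all} the arcs back by the \emph{single} power $n_0+k$. The different backward depths are absorbed into the choice of arc index: since $f^{n_0+k}(x_{n_i})=f^{k+(n_0-n_i)}(x)$, the point $x_{n_i}$ lies in $f^{-(n_0+k)}(I_{n_0-n_i})$, which is therefore a continuum meeting $X\setminus O_M$ at $x_{n_i}$ and meeting $M\subseteq O_M$ at a point of $f^{-(n_0+k)}(f^{n_0-n_i}(t))\cap M$ (nonempty by strong invariance of $M$). These $s+1$ continua are pairwise disjoint because the $I_j$ are and $f^{n_0+k}$ is a single map, so each must cross the $s$-point boundary $\partial O_M$ --- a contradiction. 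This index bookkeeping ($j=n_0-n_i$) is precisely the missing step in your proposal; your alternative setup, with the finite-boundary set around $y_0$ and the forward orbit outside it, does not obviously admit such a fix, since the preimage continua $f^{-n}(x)$ near $y_0$ live at genuinely different depths and cannot all be realized inside preimages under one fixed power.
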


\begin{proof} Set $M = \omega_f(x)$. By Theorem \ref{amjde} and Remark \ref{r1}, $A(M)$ is a closed and invariant under $f$ and $f^{-1}$. As $x\in A(M)$, then $\alpha_{f}(x)\subset A(M)$. As $\alpha_{f}(x)$ is non-empty and $f$-invariant (Corollary \ref{c43}) then
	$M\subset \alpha_{f}(x)$. Now assume that  $M\subsetneq\alpha_f(x)$, then there exists some $a\in \alpha_{f}(x)\setminus M$. Let $O_{M}$ be an open neighborhood of $M$ with finite boundary of cardinality $s$ such that $a\notin \overline{O_{M}}$. Then there exist $n_{s}<n_{1}<\dots<n_{0}$ such that $\{x_{n_{j}}:\;0\leq j\leq s\}\cap O_{M}=\emptyset$ with $f^{n_{j}}(x_{n_{j}})=x$. Set $r=n_{0}-n_{s}$.
Let $t\in M$. As $M$ is infinite, we can find an integer $k\geq n_{0}$ such that $f^{k+j}(x)\in O_{M}$ for all $0\leq j\leq r$ and a family of pairwise disjoint arcs $(I_{j})_{0\leq j\leq r}\subset O_M$, each of which joins $f^{k+j}(x)$ and $f^{j}(t)$. Consider now the pairwise disjoint family $(f^{-(n_{0}+k)}(I_{j}))_{0\leq j\leq r}$, each member of it meets $M$ and contains $x_{n_{0}-j}$ (since $x_{n_{0}-j}\in f^{-(n_{0}+k)}(f^{k+j}(x))$). On the other hand, for $j=n_{0}-n_{i}, ;\ 0\leq i\leq s$, $f^{-(n_{0}+k)}(I_{j})$ contains $x_{n_{i}}$, hence it intersects $X\setminus O_{M}$. Therefore for $j=n_{0}-n_{i}, ;\ 0\leq i\leq s,\; f^{-(n_{0}+k)}(I_{j})$ meets the boundary $\partial O_{M}$. This contradicts card($\partial O_{M}) = s$.
\end{proof}
\medskip

\begin{proof}[Proof of Theorem \ref{tG31}]
	 We know by Lemma \ref{l42} that $\limsup G_n$ contains a minimal set. Assume that $\limsup G_n$ is not a minimal set, thus for any minimal set $M$, we have $M\neq \limsup G_{n}$.
	 \medskip
	
\textbf{Claim 1.} \textit{Any minimal set $M\subsetneq \limsup G_n$ is finite}.

\
\\
\textit{Proof}. On the contrary, there exists some infinite minimal set $M\subsetneq \limsup G_n$.
\smallskip

$\bullet$ First, let us prove that for any $t\in \limsup G_n$, we have $\omega_{f}(t)=M$:
	
 Assume that there is $t\in \limsup G_n$ such that $\omega_{f}(t)\neq M$. So $\omega_{f}(t)$ is a minimal set by Theorem \ref{t21}. Let $U$ be an open neighborhood of $M$ in $X$ with finite boundary of cardinality $k$ such that $\omega_{f}(t)\cap \overline{U}=\emptyset$ and let $z\in M$. Since the orbit $O_{f}(z)$ is infinite and $X$ is locally connected, there exists a connected closed neighborhood $K_{j}$ of $f^{j}(z)$ such that
 $(K_{j})$ $(0\leq j\leq k)$ are pairwise disjoint. Thus we can find $m>k$ such that $G_{m-j}\subset K_{j}$ for any $0\leq j\leq k$.
Now as $\omega_{f}(t)$ is a minimal set in $\limsup G_n\setminus U$, we can find $n>m$ such that $G_{n-j} \cap U=\emptyset$, for any $0\leq j\leq k$.
  Therefore $f^{-(n-m)}(K_{j})$ ($0\leq j\leq k$) is a family of pairwise disjoint continua each of which meets $U$ at some point in $f^{-(n-m)}(f^{j}(z))\cap M$ and meets $X\setminus U$ at $G_{n-j}$ since $G_{n-j} \subset f^{-(n-m)}(G_{m-j})$. Hence $f^{-(n-m)}(K_{j})$ meets the boundary $\partial U$ for any $0\leq j\leq k$. A contradiction. We conclude that for any $t\in \limsup G_n$, we have $\omega_{f}(t)=M$.
	\medskip
	
 $\bullet$ Second, let $t\in \limsup G_n\backslash M$. By above we have $\omega_{f}(t)=M$. As $\limsup G_n$ is strongly invariant (Lemma \ref{l42}), so $t\in X_{\infty}$. Now by Proposition \ref{oinf}, we have $\alpha_{f}(t)=M$. Let $V_{t}$ and $V_{M}$ be two disjoint open neighborhoods of $t$ and $M$ respectively with finite boundary. Since $M\subset V_{M}$ there exists some $N>0$ such that $f^{-n}(t) \subset V_{M}$ for any $n\geq N$.
 As $X$ is locally connected, let $V_{n}$ be an open connected neighborhood of $t$ such that $V_{n}\subset B(t,\frac{1}{n})$. Clearly $(\overline{V_{n}})_{n\geq 0}$ is a sequence of continua which converges to $\{t\}$ (with respect to the Hausdorff metric).
   As $t\in \limsup G_n$, then for any $n\geq 0$, we can find two increasing sequences of positive integers $(m_{n})_{n\geq 0}$ and $(s_{n})_{n\geq 0}$ such that $G_{m_{n}}\subset V_{n}$ and $G_{m_{n}+s_{n}}\subset V_{t}$. We can assume that $s_{n}>N$ for any $n\geq 0$. Then $f^{-s_{n}}(V_{n})$ meets $V_{M}$ at some point of $f^{-s_{n}}(t)$ and $V_{t}$ at $G_{m_{n}+s_{n}}$ since $G_{m_{n}+s_{n}}\subset f^{-s_{n}}(G_{m_{n}})$. Hence $f^{-s_{n}}(V_{n})$ meets the finite boundary $\partial V_{t}$, and then it meets one point $b\in \partial V_{t}$ for infinitely many $n$. Thus $f^{s_{n}}(b)\in V_{n}$ for infinitely many $n$ and then $t\in \omega_{f}(b)$. Since $\omega_{f}(b)$ is minimal, it follows that $\omega_{f}(t)=\omega_{f}(b)=M$. So $t\in M$, a contradiction. This ends the proof of Claim $1$. 
   \smallskip

\textbf{Claim 2}. Let $M$ be a finite minimal set in $\limsup G_n$. Then for each open neighborhood $V_{M}$ of $M$ in $X$ such that $\limsup G_n\setminus \overline{V_{M}}\neq \emptyset$, there is a point $y_V\in \limsup G_n\cap (\overline{V_{M}}\setminus M)$ such that $y_V\in f^{j}(\overline{V_{M}}\cap \limsup G_n)$, for all $j\in\mathbb{N}$.\\

\textit{Proof}.
Let $V_{M}$ be open neighborhood of $M$ in $X$ such that $\limsup G_n\setminus \overline{V_{M}}\neq \emptyset$. Let $m\in M$ and $t\in \limsup G_n\setminus V_{M}$. There exist two increasing sequences of integers $(n_{i})_{i\geq 0}$ and $(p_{i})_{i\geq 0}$ such that:

\rm{(1)} $(G_{n_{i}})_{i\geq 0}$ converges $\{m\}$.

\rm{(2)} $(G_{p_{i}})_{i\geq 0}$ converges to $\{t\}$.

\rm{(3)}  $n_{i}>p_{i}$ for any $i\geq 0$.

Denote by $A_{i,j}= \displaystyle\bigcup_{s=0}^{j}G_{n_{i}-s}$. Since $f(M)=M$ and $(G_{n})_{n\geq 0}$ is a null family, we can easily prove that for any $j\geq 0$, $(A_{i,j})_{i\geq 0}$ converges to $L\subset M$ (with respect to the Hausdorff metric). Thus for any $j\geq 0$, we can find $i_{j}\geq j$ such that $A_{i_{j},j}\subset V_{M}$. Since $(G_{n})_{n\geq 0}$ is a null family, so by (2), we can assume that $G_{p_{i_{j}}}\cap V_{M}=\emptyset $ and by (3), $n_{i_{j}}>p_{i_{j}}$. Therefore there exists $t_{i_{j}}\geq i_{j}$ such that:\\
 \rm{(i)} For any $0\leq s\leq t_{i_{j}},\; G_{n_{i}-s}\cap V_{M} \neq \emptyset$;\\
 \rm{(ii)} $G_{n_{i_{j}}-t_{i_{j}}-1} \cap V_{M}=\emptyset$;\\
 \rm{(iii)} $n_{i_{j}}-t_{i_{j}}-1\geq p_{i_{j}}$.\\

By (iii), we have $\displaystyle\lim_{j\to +\infty}n_{i_{j}}-t_{i_{j}}=+\infty$. So we can assume that $G_{n_{i_{j}}-t_{i_{j}}}$ converges (with respect to the Hausdorff metric). As $(G_{n})_{n\geq 0}$ is a null family, so by Lemma \ref{FI}, $\displaystyle\lim_{j\to +\infty}G_{n_{i_{j}}-t_{i_{j}}} = \displaystyle\lim_{j\to +\infty}\overline{ G_{n_{i_{j}}-t_{i_{j}}}\cap V_{M}} = \{y_{V}\}$. Hence $y_{V}\in \limsup G_n\cap \overline{V_{M}}$. Let $k\geq 0$ and let $\{y_{V,k}\}$ be some limit point of $(G_{n_{i_{j}}-t_{i_{j}}+k})_{j\geq 0}$. Let us prove that $ f^{k}(y_{V,k})=y_{V}$ and $y_{V,k}\in \limsup G_n\cap \overline{V_{M}}$.\\
 For $j$ large enough and from (i), we have $G_{n_{i_{j}}-t_{i_{j}}+k}\cap V_{M}\neq \emptyset$. Again by Lemma \ref{FI} and the fact that $(G_{n})_{n\geq 0}$ is a null family, we have $y_{V,k} \in \limsup G_n\cap \overline{V_{M}}$. Furthermore, $f^{k}(G_{n_{i_{j}}-t_{i_{j}}+k})\subset G_{n_{i_{j}}-t_{i_{j}}}$. Since $2^{f}$ is continuous, $\{f^{k}(y_{V,k})\}=\displaystyle\lim_{j\to +\infty}f^{k}(G_{n_{i_{j}}-t_{i_{j}}+k})\subset G_{n_{i_{j}}-t_{i_{j}}}$. As $G_{n_{i_{j}}-t_{i_{j}}}$ converges to $\{y_{V}\}$, we have $f^{k}(y_{V,k})=y_{V}$. In addition, $y_V\in f^{k}\Big(\overline{V_{M}}\cap \limsup G_n\Big)$, for all $k\in\mathbb{N}$.\\
 It remains to show that $y_{V}\notin M$. Otherwise, $y_{V}\in M$ and then $f(y_{V})\in M$. Thus for $j$ large enough, $ G_{n_{i_{j}}-t_{i_{j}}-1}\cap V_{M}\neq \emptyset$, which contradicts (ii). This ends the proof of Claim 2.
\medskip

 \textbf{Claim 3}. There is an open neighborhood $V$ of $M$ such that \\ $\limsup G_{n}\setminus \overline{V}\neq \emptyset$ and $y_{V}\notin P(f)$.
 \medskip

\textit{Proof.} Otherwise, for any open neighborhood $V$ of $M$ such that $\limsup G_{n}\setminus \overline{V}\neq \emptyset$, we have $y_{V}\in P(f)$. Let $(V_{n})_{n\geq 0}$ be a decreasing sequence of neighborhoods of $M$ such that $\limsup G_{n}\setminus \overline{V_n}\neq \emptyset$. So let $y_{n}=y_{V_{n}}$ given by Claim 2. Since for any $n\geq 0,\; y_{n}\notin M$, we can assume that the sequence $(y_{n})_{n\geq 0}$ is pairwise distinct. Let $W_{M}$ be some open neighborhood of $M$ with finite boundary of cardinality $k$ such that $O_{f}(y_{0})\cap \overline{W_{M}}=\emptyset$. In $W_{M}$ we can find infinitely many $(\overline{V_{n}})_{n\geq 0}$, so let $(O_{j})_{0\leq j \leq k}$ be a family of pairwise disjoint open connected neighborhoods of $y_{p_{j}}$. Recall that for any $0\leq j \leq k$, we have $y_{p_{j}}\in \limsup G_{n}$. Hence we can find $k+1$ positive integer $m_{k}<m_{k-1}<\dots<m_{0}$, such that $G_{m_{j}}\subset O_{j}$, for any $0\leq j \leq k$. Let $r = m_{0}-m_{k}$. Now as $O_{f}(y_{0})\subset\limsup G_n$, we can find $p>m_{0}$ such that $\displaystyle\bigcup_{s=0}^{r}G_{p-s}\cap W_{M}=\emptyset$. Therefore, for $i:=m_{0}-m_{j},\; 0\leq j\leq k,\;   f^{-(p-m_{0})}(O_{i})$ meets $\overline{V_{y_{n_{j}}}}\subset W_{M}$ at some point of $f^{-(p-m_{0})}(y_{p_{j}})$ and $X\setminus W_{M}$ at least in $G_{p-(m_{0}-m_{j})}$, thus it meets $\partial W_{M}$. A contradiction with $k=\textrm{card}(\partial W_{M})$.
\medskip
\smallskip

 \textit{End of the proof of Theorem \ref{tG31}.}
 Let $V$ and $y_{V}\notin P(f)$ be as in Claim 3. Since $\omega_{f}(y_{V})\subset \limsup G_{n}$ is a minimal set, so by Claim 1, it is finite. Since $y_{V}\notin P(f)$, $y_{V}\notin \omega_{f}(y_{V})$. Also $(f^{-k}(y_{V}))_{k\geq 0}$ is pairwise disjoint, so by Proposition \ref{Fs}, it is a null family. By Claim 2,  $f^{-k}(y_{V})\cap \overline{V}\neq \emptyset$, for any $k\geq 0$. Therefore $\alpha_{f}(y_{V})\subset \overline{V}$. Recall that $\limsup G_{n} \nsubseteq \overline{V}$, so let $t\in \limsup G_n\setminus \alpha_{f}(y_{V})$.\\
 We can find a sequence of positive integers $(n_{i})_{i\geq 0}$ such that $G_{n_{i}}$ converges to $y_{V}$. As $X$ is locally arcwise connected, we can find a decreasing sequence $(O_{i})_{i\geq 0}$ of connected open neighborhoods of $y_{V}$. Clearly $(\overline{O_{i}})_{i\geq 0}$ is a sequence of continua that converges to $\{y_{V}\}$ and we can assume that $G_{n_{i}}\subset O_{i}$ for any $i\geq 0$.
	Now as $t\notin \alpha_{f}(y_{V})$, we can find two disjoint open neighborhoods with finite boundary $V_{t}$ and $V_{\alpha_{f}(y_{V})}$. Clearly there exists some $N\geq 0$ such that $f^{-k}(y_{V})\subset V_{\alpha_{f}(y_{V})}$ for any $k\geq N$.
  Recall that $t\in \limsup G_{n}$. Then we can find infinitely many $n\geq 0$ such that $G_{n}\subset V_{t}$ and so we can find an increasing sequence $(s_{i})_{i\geq 0}$ such that $G_{n_{i}+s_{i}}\subset V_{t}$ for any $i\geq 0$. Clearly we can assume that for any $i\geq 0$ we have $s_{i}>N$. It follow that for any $i\geq 0$, $f^{-s_{i}}(O_{i})$ meets $V_{t}$ and $V_{\alpha_{f}(y_{V})}$, hence it meets the finite boundary $\partial V_{t}$. Thus there exists $b\in \partial V_{t}$ such that $f^{s_{i}}(b)\in O_{i}$ for infinitely many $i$. Therefore $y_{V}\in \omega_{f}(b)$ and hence $\omega_{f}(b) = \omega_{f}(y_{V})$  by minimality of $\omega_{f}(b)$. In result, $y_{V}\in \omega_{f}(y_{V})$. A contradiction.
\end{proof}

As a consequence, we obtain the following theorem.

\begin{thm}\label{t31}
	Let $X$ be a regular curve and $f$ a monotone self mapping of $X$. Then we have:
	\begin{itemize}
		\item[(i)] For any $x\in X_{\infty}\setminus P(f),\; \alpha_{f}(x)$ is a minimal set.
		\item[(ii)] For any $x\in X_{\infty}$, if $(x_{n})_{n\geq 0}$ is a negative orbit of $x$, then $\alpha((x_{n})_{n})$ is a minimal set.
	\end{itemize}
\end{thm}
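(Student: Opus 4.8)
The plan is to derive both statements directly from Theorem~\ref{tG31} by exhibiting, in each case, a null family $(G_n)_{n\geq 0}$ of compact sets with $f(G_{n+1})\subset G_n$ whose $\limsup$ is the desired $\alpha$-limit set. The identifications $\limsup G_n = \alpha_f(x)$ (for $G_n = f^{-n}(x)$) and $\limsup G_n = \alpha((x_n)_n)$ (for $G_n = \{x_n\}$) are already recorded in the discussion preceding Lemma~\ref{FI}, so the whole task reduces to verifying the null-family hypothesis in each case; once that is done, Theorem~\ref{tG31} delivers minimality immediately.

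For part (ii) this check is trivial. I would take $G_n = \{x_n\}$: each $G_n$ is a singleton, so $\mathrm{diam}(G_n)=0$ for every $n$ and $(G_n)_{n\geq 0}$ is automatically a null family, while $f(G_{n+1}) = G_n$ holds by the defining property of a negative orbit. Theorem~\ref{tG31} then gives that $\alpha((x_n)_n) = \limsup G_n$ is minimal.

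For part (i), I would set $G_n = f^{-n}(x)$. Since $x\in X_\infty$ we have $x\in f^n(X)$ for all $n$, so each $G_n$ is non-empty; since $f$, and hence $f^n$, is monotone, each $G_n = (f^n)^{-1}(x)$ is connected and compact, i.e.\ a continuum, and clearly $f(G_{n+1})\subset G_n$. The key step, and the only place the hypothesis $x\notin P(f)$ is used, is to show that the $G_n$ are pairwise disjoint: if $y\in G_n\cap G_m$ with $n<m$, then $f^n(y) = f^m(y) = x$, whence $f^{m-n}(x) = f^{m-n}(f^n(y)) = f^m(y) = x$, contradicting that $x$ is not periodic. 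With disjointness established, Proposition~\ref{Fs} guarantees that the family of pairwise disjoint subcontinua $(G_n)_{n\geq 0}$ is null, and a final appeal to Theorem~\ref{tG31} shows that $\alpha_f(x) = \limsup G_n$ is minimal.

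I do not expect a serious obstacle: the argument is a clean reduction to the already-proved Theorem~\ref{tG31}. The single point that deserves care is exactly the interplay between the hypothesis and Proposition~\ref{Fs}, namely that non-periodicity is precisely what upgrades the nested preimages $f^{-n}(x)$ from an arbitrary family satisfying $f(G_{n+1})\subset G_n$ to a \emph{pairwise disjoint} family of subcontinua, which is the exact input Proposition~\ref{Fs} requires to conclude nullity. This also explains why the periodicity hypothesis in (i) cannot be dropped: for periodic $x$ the preimages $f^{-n}(x)$ need not be disjoint, the null-family argument fails, and the authors indeed announce a counterexample on an infinite star where $\alpha_f(x)$ is not minimal for some periodic $x$.
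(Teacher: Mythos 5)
Your proposal is correct and follows essentially the same route as the paper: both parts are reduced to Theorem~\ref{tG31} by taking $G_n=f^{-n}(x)$ (using non-periodicity to get pairwise disjoint subcontinua, hence a null family via Proposition~\ref{Fs}) and $G_n=\{x_n\}$ respectively. Your explicit verification of disjointness and your remark on why periodicity must be excluded are details the paper leaves implicit, but the argument is the same.
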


	\begin{proof}
		\rm{(i)} If $x\in X_{\infty}\setminus P(f)$ and $G_n= f^{-n}(x)$, then $\limsup G_n= \alpha_{f}(x)$. Moreover $(G_n)_{n\geq 0}$ is a family of pairwise disjoint sub-continua of $X$ and so by Proposition \ref{Fs}, it is a null family. Furthermore we have $f(G_{n+1})\subset G_n$, for every $n\geq 0$. We deduce from Theorem \ref{tG31} that $\alpha_{f}(x)$ is a minimal set.
	
	\rm{(ii)} Set $G_n= \{x_n\}$, $n\geq 0$. In this case, $\limsup G_n= \alpha((x_{n})_{n\geq 0})$. Clearly  $(G_{n})_{n\geq 0}$ is a null family of compact sets and $f(G_{n+1})=G_{n}$. Thus from Theorem \ref{tG31}, $\alpha((x_{n})_{n\geq 0})$ is a minimal set.
	\end{proof}

\begin{cor}\label{c45} Let $X$ be a regular curve and $f$ a monotone self mapping of $X$ and $x\in X$.
	If $x\notin P(f)$ and ($x_{n})_{n\geq 0}$ is a negative orbit of $x$, then $\alpha_{f}(x) = \alpha_f((x_n)_{n\geq 0})$.
\end{cor}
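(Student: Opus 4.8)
The plan is to derive the corollary directly from Theorem \ref{t31} together with the elementary inclusion $\alpha_f((x_n)_{n\geq 0})\subset \alpha_{f}(x)$ already noted in the text. First I would observe that the mere existence of a negative orbit $(x_n)_{n\geq 0}$ of $x$ forces $x\in X_{\infty}$: since $x_0=x$ and $f(x_{n+1})=x_n$ for every $n\geq 0$, we have $x=f^{n}(x_n)\in f^{n}(X)$ for all $n$, whence $x\in X_{\infty}$. Consequently the hypothesis $x\notin P(f)$ places $x$ in $X_{\infty}\setminus P(f)$, and both $\alpha_{f}(x)$ and $\alpha_f((x_n)_{n\geq 0})$ are non-empty by Corollary \ref{c43}.

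Next I would invoke Theorem \ref{t31}: part (i) gives that $\alpha_{f}(x)$ is a minimal set, and part (ii) gives that $\alpha_f((x_n)_{n\geq 0})$ is a minimal set. The structural point is then that two minimal sets, one contained in the other, must coincide. Indeed, by Corollary \ref{c43}(ii) the set $\alpha_f((x_n)_{n\geq 0})$ is a non-empty, closed, (strongly) $f$-invariant subset of the minimal set $\alpha_{f}(x)$. By the very definition of a minimal set, $\alpha_{f}(x)$ contains no proper non-empty closed $f$-invariant subset, so the inclusion $\alpha_f((x_n)_{n\geq 0})\subset \alpha_{f}(x)$ cannot be strict; this yields the desired equality $\alpha_{f}(x)=\alpha_f((x_n)_{n\geq 0})$.

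I do not anticipate any genuine obstacle at this stage: the entire difficulty has been absorbed into Theorem \ref{tG31} and its consequence Theorem \ref{t31}, which established the minimality of both $\alpha$-limit objects. The only points requiring a word of care are to verify that $\alpha_f((x_n)_{n\geq 0})$ is non-empty — so that the inclusion into $\alpha_{f}(x)$ is not vacuous, which follows from compactness of $X$ (or directly from Corollary \ref{c43}(ii)) — and to apply the standard fact, immediate from the definition of minimality, that a non-empty closed invariant subset of a minimal set is the whole set.
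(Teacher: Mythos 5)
Your proof is correct and follows essentially the same route as the paper: the paper likewise observes that $\alpha_f((x_n)_{n\geq 0})\subset\alpha_{f}(x)$ and that $\alpha_{f}(x)$ is minimal by Theorem \ref{t31}, whence equality. Your additional remarks (that the existence of a negative orbit forces $x\in X_{\infty}$, and that the contained set is non-empty, closed and invariant via Corollary \ref{c43}) merely make explicit the details the paper leaves implicit.
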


\begin{proof}
  As $\alpha_f((x_n)_{n\geq 0})\subset\alpha_{f}(x)$, and $\alpha_{f}(x)$ is a minimal set (Theorem \ref{t31}), so Corollary \ref{c45} follows.
\end{proof}

\begin{rem} \rm{
		
		(i) The Corollary \ref{c45} is false in general whenever $x\in P(f)$ (see Example \ref{CEM}).
		
		(ii) Contrarily to homeomorphisms (see \cite{n}, Corollaries 2.5 and 2.6), it may happen, for a monotone map $f$ on a regular curve, that $\omega_{f}(x)$ is finite but $\alpha_{f}(x)$ infinite (see Example \ref{CEM}).}
\end{rem}

\begin{exe}\label{CEM} \rm{ We shall construct a monotone map $f$ on an infinite star $X$ centered at a point $z_{0}\in \mathbb{R}^{2}$ and beam $I_{n},\; n\geq 0$ with endpoints  $z_{0}$ and $z_{n}$ satisfying the following properties.
		\medskip
		
		\begin{itemize}
			\item[(i)] For any $x\in X\setminus \{z_{n}: n\geq 0\}$ we have $\omega_{f}(x)=\{z_{0}\}$ and $\alpha_{f}(x) =\{z_{n_{x}}\}$, where $n_{x}\geq 1$ such that $x\in I_{n_{x}}$.
			\item[(ii)] For any $n\geq 1$ we have $\alpha_{f}(z_{n})=\omega_{f}(z_{n})=\{z_{n}\}$.
			\item[(iii)] $\omega_{f}(z_{0})=\{z_{0}\},\; \alpha_{f}(z_{0})=X$.
		\end{itemize} }
	\end{exe}
	
	\begin{figure}[!h]
		\centering
		\includegraphics[width=12cm, height=10cm]{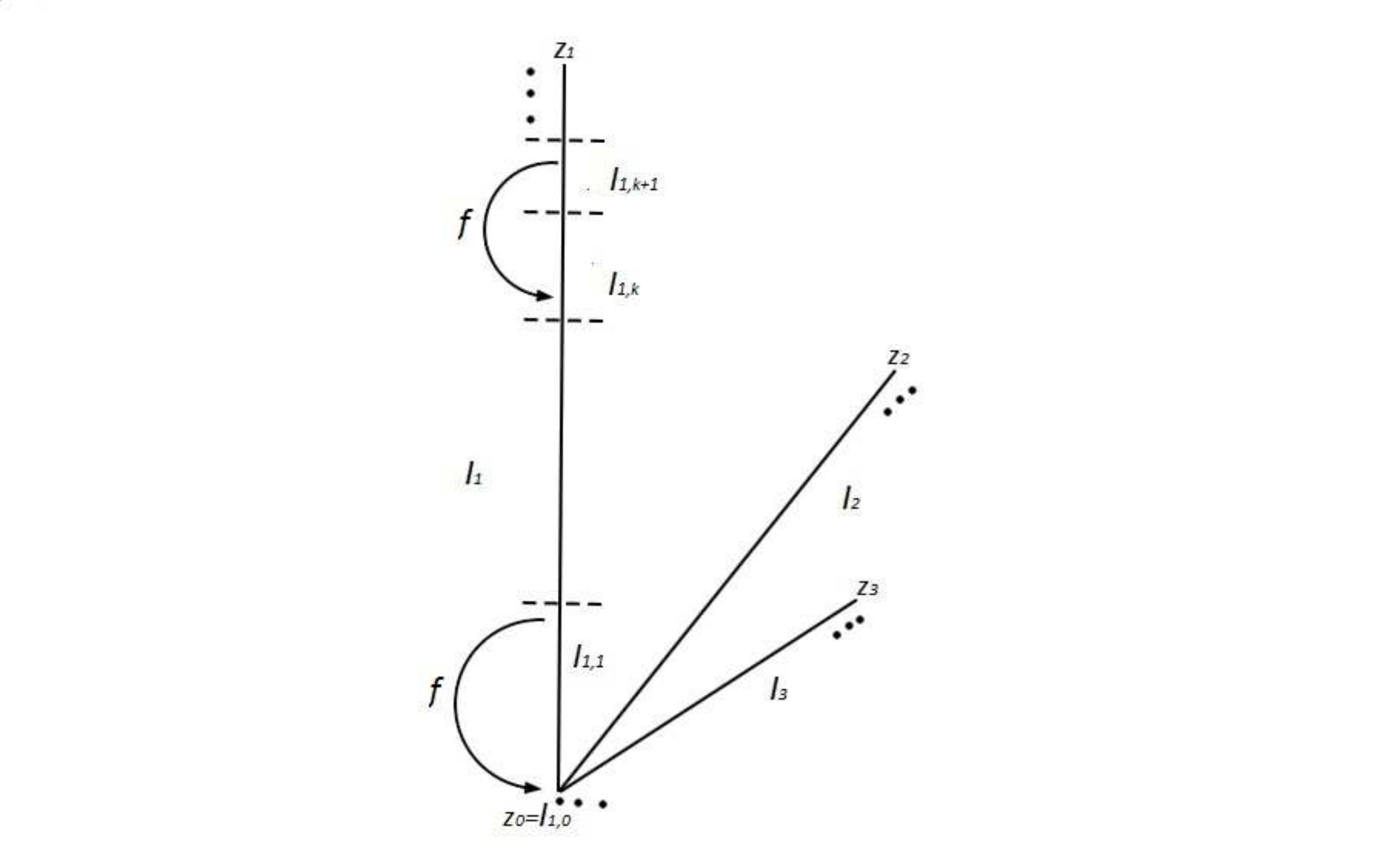}
		\caption{The map $f$ on the infinite star $X$}
	\end{figure}
	\medskip
	
	\begin{proof}
		The space $X$ is a countable union $X=\displaystyle\bigcup_{n\geq 0} I_{n}$ of arcs $I_n$ in $\mathbb{R}^{2}$ for each of which $z_{0}$ is one of its endpoints, $z_{n}$ is the other endpoint of $I_{n}$ distinct from $z_{0}$, with $n\geq 1$, such that for any $n\neq m$, $I_n\cap I_{m}=\{z_{0}\}$ and $\underset{n\to +\infty}\lim\textrm{diam}(I_{n})=0$. The space $X$, known as the $\omega$-star, is then a regular curve. The $I_{n},\; n\geq 1$ are the branches of $X$ and $z_{0}$ is its central point.  Denote by
		$I_{0} = \{z_{0}\}$. Let the map $g:[0,1]\to [0,1],\; x\longrightarrow \max\{0,2x-1\}$. We extend $g$ into a map of $X$ such that the restriction of $f$ on each branch is $g$, where the central point $\{z_{0}\}$ is playing the role of $0$.
		%
		%
		%
		
		It is plain that the map $f$ is continuous monotone and onto. Moreover, it satisfies the properties (i),(ii) and (iii) above. Indeed :
		
		\rm{(i)} We have $\displaystyle\bigcup_{n\geq 0} f^{-n}(z_{0})=X\setminus \{z_{n}: n\geq 1\}$, therefore $\omega_{f}(x)=\{z_{0}\}$. Notice that $x$ has only one negative orbit $(x_{k})_{k\geq 0}$ such that for any $k\geq 0,\; x_{k}\in I_{n_{x}}$. Hence $\alpha_{f}(x) =\{z_{n_{x}}\}$.\\
		
		\rm{(ii)} For any $n\geq 1$, we have $\{f(z_{n})\}=\{f^{-1}(z_{n})\}=\{z_{n}\}$. So (ii) follows.\\
		\rm{(iii)} $\omega_{f}(z_{0})=\{z_{0}\}$. Indeed, $z_{0}$ is a fixed point and $\alpha_{f}(z_{0})=\overline{\displaystyle\bigcup_{n\geq 0} f^{-n}(z_{0})}=X$. Notice that any negative orbit $(x_{k})_{k\geq 0}$ of $z_{0}$ is either the constant sequence $(x_{k}=z_{0})_{k\geq 0}$ or it coincides with the negative orbit of some $x\in X\setminus \{z_{n}: n\geq 0\}$, for $k$ large enough.
	\end{proof}			

In \cite{am}, Mchaalia proved for a monotone regular curve self-mapping $f$ of $X$, that every totally periodic $\omega$-limit set is finite, that is a periodic orbit. The following corollary shows analogously that every totally periodic $\alpha$-limit set is a periodic orbit.

\begin{cor}\label{pa}
Let $X$ be a regular curve, $f$ a monotone self mapping of $X$ and $x\in X_{\infty}$. If $\alpha_{f}(x)\subset P(f)$, then $\alpha_{f}(x)$ is a periodic orbit.
\end{cor}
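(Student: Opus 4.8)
The plan is to prove Corollary \ref{pa} by combining the minimality results already established for $\alpha$-limit sets with the $\omega$-FTP property of regular curves (Theorem \ref{c25}). The key observation is that $\alpha_f(x)$, being strongly $f$-invariant and non-empty by Corollary \ref{c43}, contains at least one minimal set; and if every point of $\alpha_f(x)$ is periodic, then that minimal set is a single periodic orbit. The main subtlety is that Theorem \ref{t31} gives minimality of $\alpha_f(x)$ only when $x\notin P(f)$, so I would first dispose of the case $x\in P(f)$ separately, and then handle the case $x\notin P(f)$ using the minimality directly.

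First I would treat the case $x\notin P(f)$. By Theorem \ref{t31}(i), $\alpha_f(x)$ is a minimal set. Since by hypothesis $\alpha_f(x)\subset P(f)$, every point of $\alpha_f(x)$ is periodic. Pick any $a\in\alpha_f(x)$ of period $p$; then $\mathrm{Orb}_f(a)$ is a non-empty closed $f$-invariant subset of the minimal set $\alpha_f(x)$, forcing $\alpha_f(x)=\mathrm{Orb}_f(a)$, a periodic orbit. Alternatively, and more robustly, I would invoke Theorem \ref{c25}: since $\alpha_f(x)$ is closed, strongly $f$-invariant, and contained in $P(f)$, I can realize it as a totally periodic $\omega$-limit set (for instance, because any point of a strongly invariant set with all points periodic generates an $\omega$-limit set filling an orbit, and minimality pins it down), whence it is finite, hence a periodic orbit.

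For the remaining case $x\in P(f)$, I would argue that $\alpha_f(x)$ still contains a minimal set $M$ by Lemma \ref{l42} (applied with $G_n=f^{-n}(x)$), and that $M$, being a minimal set inside $P(f)$, is a single periodic orbit. The harder part here is to rule out that $\alpha_f(x)$ is strictly larger than a single periodic orbit: Example \ref{CEM} shows that for a periodic (indeed fixed) point $z_0$, the set $\alpha_f(z_0)$ can be the whole space $X$, which is \emph{not} contained in $P(f)$. This is precisely why the hypothesis $\alpha_f(x)\subset P(f)$ is doing the work: under this hypothesis I would show that $\alpha_f(x)$ is a totally periodic, strongly invariant closed set, and then apply Theorem \ref{c25} to conclude finiteness, after which strong invariance forces it to be a finite union of periodic orbits—and connectedness of the orbit structure through the dynamics collapses this to a single periodic orbit.

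The step I expect to be the main obstacle is passing from ``$\alpha_f(x)$ is totally periodic and strongly invariant'' to ``$\alpha_f(x)$ is a single periodic orbit'' when $x$ itself is periodic, since in that case Theorem \ref{t31} is unavailable and $\alpha_f(x)$ need not a priori be minimal. The cleanest route is to show that the null-family machinery of Theorem \ref{tG31} together with Theorem \ref{c25} forces finiteness directly: once $\alpha_f(x)$ is finite and strongly $f$-invariant with all points periodic, it decomposes into finitely many periodic orbits, and one shows using $f(\limsup G_n)=\limsup G_n$ and the backward-orbit structure that only one such orbit can appear as an accumulation set of the backward iterates, giving the desired periodic orbit. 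I would present the $x\notin P(f)$ case as the main line and indicate the periodic case as a short supplement leaning on Theorem \ref{c25}.
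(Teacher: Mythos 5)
Your treatment of the case $x\notin P(f)$ is correct and is exactly the paper's argument: Theorem \ref{t31}(i) gives minimality of $\alpha_f(x)$, and a minimal set contained in $P(f)$ is a single periodic orbit. The gap is in the case $x\in P(f)$. There you propose to conclude finiteness of $\alpha_f(x)$ by applying Theorem \ref{c25}, but that theorem is stated only for totally periodic \emph{$\omega$-limit} sets, and $\alpha_f(x)$ is not an $\omega$-limit set of any point here. Your attempted bridge --- ``any point of a strongly invariant set with all points periodic generates an $\omega$-limit set filling an orbit'' --- only says that each individual point of $\alpha_f(x)$ has a finite $\omega$-limit set (namely its own orbit), which gives no control whatsoever on the size of $\alpha_f(x)$ itself. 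So the finiteness step is unsupported, and the subsequent reduction from ``finitely many periodic orbits'' to ``one periodic orbit'' is also left as an unproved assertion. (A secondary issue: strong $f$-invariance of $\alpha_f(x)$ for periodic $x$ is not automatic either; Corollary \ref{c43}(i) guarantees it only when $\mathrm{diam}(f^{-n}(x))\to 0$, which fails precisely in examples like Example \ref{CEM}.)

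The paper closes this case with a short direct computation that you are missing. For $x\in P(f)$ of period $p$ one has $f^{-n}(x)\subset f^{-(n+sp)}(x)$ for all $s$, so $\alpha_f(x)=\overline{\bigcup_{n\geq 0}f^{-n}(x)}$ and in particular $\bigcup_{n\geq 0}f^{-n}(x)\subset \alpha_f(x)\subset P(f)$. Hence every $y$ with $f^{n}(y)=x$ is itself periodic; writing $x=f^{n}(f^{sp-n}(x))$ with $sp\geq n$ and using that $f$ restricted to $P(f)$ is injective, one gets $y=f^{sp-n}(x)\in \mathrm{Orb}_f(x)$. Thus $\bigcup_{n\geq 0}f^{-n}(x)=\mathrm{Orb}_f(x)$ and $\alpha_f(x)=\mathrm{Orb}_f(x)$ outright --- no appeal to Theorem \ref{c25}, to finiteness of totally periodic limit sets, or to the null-family machinery is needed. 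You should replace your second case with this injectivity argument.
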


\begin{proof}
Assume that $\alpha_{f}(x)\subset P(f)$. If $x\in X_{\infty}\setminus P(f)$, then by Theorem \ref{t31}, $\alpha_{f}(x)$ is minimal, and hence it is a periodic orbit. If $x\in P(f)$, then
$\alpha_{f}(x) = \overline{\displaystyle\bigcup_{n\geq 0}f^{-n}(x)}$. Moreover, we have $\displaystyle\bigcup_{n\geq 0}f^{-n}(x)=O_{f}(x)$; indeed, if $y\in \displaystyle\bigcup_{n\geq 0}f^{-n}(x)$, there is $n\geq 0$ such that $f^{n}(y)=x$ and moreover $y\in P(f)$. As $f^{n}(y) = f^{n}(f^{sp-n})(x)$, where $p$ is the period of $x$ and $s\geq 1$ an integer such that $sp-n\geq 0$, then $y= f^{sp-n}(x)\in O_{f}(x)$ (since the restriction $f_{|P(f)}$ is injective). We conclude that $\alpha_{f}(x)=O_{f}(x)$ a periodic orbit.
\end{proof}
\medskip

\medskip

\textbf{Conflict of interest}\\

The authors declare that there is no conflict of interests regarding the publication of this article. 
\\

\textbf{Data availability statement} \\

All data analysed in this study are included in this article.
\\

\textbf{Acknowledgements} \\

This work was supported by the research unit: ``Dynamical systems and their applications'', (UR17ES21), Ministry of
Higher Education and Scientific Research, Faculty of Science of Bizerte, Bizerte, Tunisia.
\bigskip

\bibliographystyle{amsplain}
\bigskip

\end{document}